\documentclass[11pt,reqno]{amsart}

\usepackage{amssymb}
\usepackage{amsmath,amsthm,amsfonts,amssymb,latexsym,mathrsfs,color,hyperref}
\usepackage{graphicx}
\usepackage{xspace}
\usepackage{multirow}
\usepackage{diagbox}
\usepackage{capt-of}
\usepackage{tikz}
\tikzset{
	level 1/.style = {sibling distance = 1.5cm},
	level 2/.style = {sibling distance = 0.8cm},
    level distance = 1.0 cm
}
\usetikzlibrary {decorations.pathmorphing}
\tikzstyle{snakeline} = [decorate, decoration={snake, amplitude=.4mm, segment length=2mm}]

\newtheorem{theorem}{Theorem}
\newtheorem{corollary}[theorem]{Corollary}
\newtheorem{proposition}[theorem]{Proposition}

\newtheorem{lemma}[theorem]{Lemma}

\newtheorem{example}[theorem]{Example}

\newcommand{\Des}{\operatorname{Des}}
\newcommand{\SYT}{\operatorname{SYT}}
\newcommand{\maj}{\operatorname{maj}}
\newcommand{\Hom}{H}
\newcommand{\G}{\mathcal G}

\newcommand{\id}{\operatorname{id}}
\newcommand{\qbinom}[2]{\genfrac{[}{]}{0pt}{}{#1}{#2}_{q}}

\newcommand{\des}{{\rm des\,}}
\newcommand{\asc}{{\rm asc\,}}
\newcommand{\msn}{{\mathcal S}_n}

\title{Eulerian insertion operators and an Eulerian form of the Pieri rule}
\author[S.-M.~Ma]{Shi-Mei Ma}
\address{School of Mathematics and Statistics, Shandong University of Technology, Zibo, Shandong 255000, P.R. China}
\email{shimeimapapers@163.com (S.-M. Ma)}
\subjclass[2010]{Primary 05A19; Secondary 05E05}
\begin{document}

\maketitle
\begin{abstract}
We study the operators obtained by inserting copies of a new largest
letter into multiset permutations.  Let \(\G_r\) denote the
operator which inserts \(r\) copies of a new largest letter.
After the change of variables
\(\delta=y-x\), \(u=x/y\), and \(E=u\partial_u\), we find that
\[
 \G_r=\frac{\delta^r}{r!}E(E+1)\cdots(E+r-1).
\]
Its generating series acts by a rational substitution, which yields a
composition law.

Our main result gives a common symmetric-function explanation for the ordinary and major-index operators. For $N\geq 0$, define
$\Phi_N(F_{N,S})=x^{|S|+1}y^{N-|S|}$.
We prove that multiplication by the complete homogeneous symmetric function $h_r$ becomes the ordinary insertion operator:
$\Phi_{N+r}(h_r f)=\G_r\Phi_N(f)$, where $f\in\mathrm{QSym}_N$.
There is a parallel specialization for the major index. A reverse finite principal specialization sends multiplication by
\(h_r\) to an operator \(Q_r\), which is a polynomial in the
\(q\)-shift \(\Theta_qf(t)=f(qt)\).
Thus the ordinary and major-index operators arise from the same multiplication operator $f\mapsto h_r f$. 
Since the functions \(h_r\) freely generate the ring of symmetric
functions, the assignment \(h_r\mapsto\G_r\) extends to an algebra
homomorphism. We determine the kernel of this homomorphism and the image of every
homogeneous component. The images of Schur functions satisfy the Littlewood--Richardson
multiplication identities, and the one-row case gives an Eulerian form of the Pieri
rule.  
\bigskip

\noindent{\sl Keywords}: Eulerian operators; Commutativity; Pieri rule;  Schur function 
\end{abstract}
\date{\today}
\tableofcontents
\section{Introduction}
Let $\msn$ be the set of all permutations of $[n]:=\{1,2,\ldots,n\}$.
As usual, we write $\pi=\pi(1)\pi(2)\cdots\pi(n)\in\msn$. For \(i\in[n-1]\),
we say that $i$ is a {\it descent} (resp.~{\it ascent})
if $\pi(i)>\pi(i+1)$ (resp.~$\pi(i)<\pi(i+1)$).
Let $\des(\pi)$ and $\asc(\pi)$ be the numbers of descents and ascents of $\pi$, respectively.
A bivariate version of the Eulerian polynomial over the symmetric group $\msn$ is given as follows:
$$A_n(x,y)=\sum_{\pi\in\msn}x^{\des(\pi)+1}y^{n-\des(\pi)}.$$ 
Carlitz and Scoville~\cite{CarlitzScoville} found that
\begin{equation*}
A_{n+1}(x,y)=xy\left(\partial_x+\partial_y\right)A_n(x,y),~A_1(x,y)=xy.
\end{equation*}
The complement map
\(\pi_i\mapsto n+1-\pi_i\) exchanges descents and ascents, and hence
\(A_n(x,y)=A_n(y,x)\). When $y=1$, the polynomial $A_n(x,y)$ reduces to the classical Eulerian polynomial $A_n(x)$.
Eulerian polynomials have long been studied through recurrences,
generating functions and differential operators. The reader is referred to~\cite{Carlitz1973,DHS2026,MaPan2023,Petersen,Tielker} for background and further details. 
The real-rootedness of multiset Eulerian polynomials is known~\cite{BHVW2011,Simion}. 
We focus instead on the operators which describe the effect of adjoining
a new letter with prescribed multiplicity.

Let \(\boldsymbol m=(m_1,\ldots,m_n)\) be a vector of nonnegative integers,
and let \(\mathfrak S_{\boldsymbol m}\) be the set of permutations of
$\{1^{m_1},2^{m_2},\ldots,n^{m_n}\}$.
Zero multiplicities are ignored.  Write \(N=m_1+\cdots+m_n\).  For
\(\pi=\pi_1\cdots\pi_N\), let $\Des(\pi)=\{i\in[N-1]:\pi_i>\pi_{i+1}\}$ and let
\(\des(\pi)=\#\Des(\pi)\). For a nonempty multiset $\boldsymbol m$, set
\begin{equation}\label{eq:homogeneous-enumerator}
 \Hom_{\boldsymbol m}(x,y)
 =\sum_{\pi\in\mathfrak S_{\boldsymbol m}}
   x^{\des(\pi)+1}y^{N-\des(\pi)}.
\end{equation}
For the empty multiset, set
$\Hom_{\varnothing}(x,y)=x$.
For a nonempty word
\(\pi=\pi_1\cdots\pi_N\), set
\(\pi_0=\pi_{N+1}=0\).  Its \(N+1\) gaps are
\[
 (\pi_0,\pi_1),(\pi_1,\pi_2),\ldots,(\pi_N,\pi_{N+1}).
\]
Label a gap by \(x\) if its left entry is larger than its right entry, and
by \(y\) otherwise.  The product of the gap labels is the weight of the word
in \eqref{eq:homogeneous-enumerator}.

Throughout this paper, we always set $D=\partial_x+\partial_y$.
Two differential operators arising in Eulerian enumeration were found to
commute \cite{MaQiYehYeh}.  Their commutativity is easy to verify but less easy
to explain. 
They are
\[
 \G_1=xyD,
 \qquad
 \G_2=xy^2D+\frac{x^2y^2}{2}D^2.
\]
The first operator inserts one new largest letter, and the second inserts two
copies of a new largest letter.  Nothing in these formulas explains why
the order of the two insertions should be irrelevant.
The two operators are the first two members of a single factorial sequence.  
To see this, consider inserting \(r\) copies of a new largest letter.
If the inserted letters form \(j\) nonempty blocks, their lengths form a
composition of \(r\), while the blocks occupy \(j\) gaps of the old word.
This gives
\begin{equation}\label{eq:intro-Gr}
 \G_r
 =y^r\sum_{j=1}^{r}
   \binom{r-1}{j-1}\frac{x^j}{j!}D^j,
 \qquad \G_0=\id.
\end{equation}

This paper is organized as follows.
In Section~\ref{section2}, we first prove that \(\G_r\) is exactly the operator for inserting
\(r\) copies of a new largest letter, and 
we then consider the generating series of $ \G_r$, a change of variables, and the composition of two insertions.
The central result of Section~\ref{section3} is
\[
 \Phi_{N+r}(h_rf)=\G_r\Phi_N(f).
\]
Thus, under the ordinary specialization, multiplication by \(h_r\)
becomes the insertion operator \(\G_r\). The assignment \(h_r\mapsto\G_r\) then extends
to the ring of symmetric functions and carries Schur multiplication to
identities among the insertion operators.  Section~\ref{section4} proves
the parallel identity
\[
 \Psi_{N+r,q}(h_rf;t)=Q_r\Psi_{N,q}(f;t),
\]
in which \(Q_r\) is a polynomial in the \(q\)-shift.
\section{Insertion copies of a new largest
letter and the Euler operator}\label{section2}
\subsection{Eulerian insertion operators and generating series}
\begin{proposition}\label{prop:insertion}
For every multiplicity vector \(\boldsymbol m\) and every \(r\geq1\), we have
\[
 \Hom_{(m_1,\ldots,m_n,r)}(x,y)
 =\G_r\Hom_{\boldsymbol m}(x,y).
\]
\end{proposition}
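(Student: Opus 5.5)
The plan is to prove the identity word by word. By linearity it then suffices to check that $\G_r$ sends the weight monomial of a single word $\pi\in\mathfrak S_{\boldsymbol m}$ to the total weight of all words obtained from $\pi$ by inserting $r$ copies of the new letter $L:=n+1$.

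\emph{Step 1 (decomposition).} Every $\sigma\in\mathfrak S_{(m_1,\ldots,m_n,r)}$ is obtained in exactly one way as follows. Delete all copies of $L$ to recover $\pi\in\mathfrak S_{\boldsymbol m}$. The maximal runs of consecutive $L$'s in $\sigma$ form $j\ge1$ nonempty blocks. These blocks sit in $j$ distinct gaps of $\pi$, since two maximal blocks are separated by at least one old letter. Reading the block lengths from left to right gives a composition $(\ell_1,\ldots,\ell_j)$ of $r$. Conversely, any choice of $j$ gaps of $\pi$ together with a composition of $r$ into $j$ parts yields a unique $\sigma$. There are $\binom{r-1}{j-1}$ such compositions. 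The empty multiset is the degenerate case: the single gap $(0,0)$ gives $\sigma=L^r$ with weight $xy^r$, and this agrees with $\G_r x=y^r\cdot x D x=xy^r$, since only $j=1$ contributes.

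\emph{Step 2 (local weight change).} Insert a block $L^\ell$ into a gap $(a,b)$ of $\pi$ carrying label $z\in\{x,y\}$. The new gaps are $(a,L)$, then $\ell-1$ gaps $(L,L)$, then $(L,b)$. Because $L$ exceeds every old letter and the boundary $0$, these are labelled $y$, then $y^{\ell-1}$, then $x$. So the label $z$ is replaced by $xy^{\ell}$. Suppose $\pi$ has weight $x^ay^b$, so it has $a$ gaps labelled $x$ and $b$ labelled $y$. If the $j$ chosen gaps consist of $k$ $x$-gaps and $j-k$ $y$-gaps, then $\sigma$ has weight
\[
x^{a-k+j}y^{b-(j-k)+r},
\]
independent of the composition. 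Summing over the choices gives
\[
y^r\sum_{j=1}^r\binom{r-1}{j-1}\sum_k\binom{a}{k}\binom{b}{j-k}x^{a-k+j}y^{b-j+k}.
\]

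\emph{Step 3 (matching the operator).} Expand $D^j=\sum_k\binom jk\partial_x^k\partial_y^{j-k}$. This gives
\[
\frac{x^j}{j!}D^j\,x^ay^b=\sum_k\frac{(a)_k(b)_{j-k}}{k!\,(j-k)!}x^{a-k+j}y^{b-j+k}=\sum_k\binom ak\binom b{j-k}x^{a-k+j}y^{b-j+k},
\]
where $(a)_k$ denotes the falling factorial. This is exactly the inner sum from Step 2. Multiplying by $y^r\binom{r-1}{j-1}$ and summing over $j$ yields \eqref{eq:intro-Gr} applied to $x^ay^b$, and linearity completes the proof.

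The only real obstacle is the bijectivity in Step 1, in particular that maximal blocks occupy distinct gaps and that deleting $L$ inverts the construction. This is immediate once blocks are taken to be maximal runs. The label computation in Step 2 also depends on using strict descents, so that the equal adjacencies $(L,L)$ are labelled $y$.
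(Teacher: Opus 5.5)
Your proof is correct and follows essentially the same route as the paper: you decompose each new word by its maximal runs of $L$ into $j$ blocks in distinct gaps with lengths forming a composition of $r$. Each block replaces its gap label by $xy^{\ell}$, and $\frac{x^j}{j!}D^j$ applied to $x^ay^b$ is matched with the choice of $k$ $x$-gaps and $j-k$ $y$-gaps via $\binom{a}{k}\binom{b}{j-k}$; like the paper, you treat the empty multiset by the direct check $\G_r(x)=xy^r=\Hom_{(r)}(x,y)$.
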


\begin{proof}
Recall that $\Hom_{\varnothing}(x,y)=x$.
Note that $\G_r(x)=xy^r= \Hom_{(r)}(x,y)$. We may therefore assume that the original word is nonempty.
Fix \(\pi\in\mathfrak S_{\boldsymbol m}\), and let \(L\) be larger
than every letter of \(\pi\).  After inserting \(r\) copies of \(L\), the new letters
form \(j\) nonempty maximal blocks for some \(1\leq j\leq r\).  From left to
right, let their lengths be
\(a_1,\ldots,a_j\).  Thus $a_1+\cdots+a_j=r$, where $a_i\geq1$.
There are \(\binom{r-1}{j-1}\) possible lists of block lengths.
The \(j\) blocks must occupy \(j\) distinct gaps of \(\pi\).
Suppose that \(\pi\) has weight \(x^ay^b\), so \(a\) gaps are labelled
\(x\) and \(b\) gaps are labelled \(y\).  Since
\(D=\partial_x+\partial_y\), we obtain
\[
\begin{aligned}
 \frac{D^j}{j!}(x^ay^b)
 &=\frac1{j!}\sum_{s=0}^j\binom{j}{s}
   \partial_x^s\partial_y^{j-s}(x^ay^b)\\
 &=\sum_{s=0}^j
   \binom{a}{s}\binom{b}{j-s}
   x^{a-s}y^{b-j+s}.
\end{aligned}
\]
The second equality follows from
\[
 \partial_x^s x^a=s!\binom{a}{s}x^{a-s},
 \qquad
 \partial_y^{j-s}y^b=(j-s)!\binom{b}{j-s}y^{b-j+s},
\]
together with $\binom{j}{s}s!(j-s)!=j!$.
For fixed \(s\), the coefficient
\(\binom{a}{s}\binom{b}{j-s}\) chooses \(s\) of the \(x\)-labelled gaps
and \(j-s\) of the \(y\)-labelled gaps, while
\(x^{a-s}y^{b-j+s}\) records the labels of the gaps not chosen.
Summing over \(s\) therefore counts every unordered set of \(j\)
distinct gaps exactly once.  Thus \(D^j/j!\) is precisely the operator
which chooses those \(j\) gaps.

Suppose that a block \(L^{a_i}\) is inserted in the gap
\((\pi_s,\pi_{s+1})\).  Since \(L\) is larger than every old letter,
the old adjacency is replaced by
\[
 \pi_s<L=\cdots=L>\pi_{s+1}.
\]
This description also holds for the two boundary gaps, since
\(\pi_0=\pi_{N+1}=0<L\).  The new chain contains \(a_i\)
non-descents and one descent just before $\pi_{s+1}$.  Its weight is therefore
\[
 xy^{a_i}.
\]
If the \(j\) blocks have lengths \(a_1,\ldots,a_j\), their combined
weight is
\[
 \prod_{i=1}^jxy^{a_i}
 =x^jy^{a_1+\cdots+a_j}
 =x^jy^r.
\]
As shown above, \(D^j/j!\) chooses the \(j\) occupied gaps and removes
their old labels from the weight.  Since there are
\(\binom{r-1}{j-1}\) compositions of \(r\) into \(j\) positive parts,
the contribution of all insertions with exactly \(j\) blocks is
\[
 y^r\binom{r-1}{j-1}\frac{x^j}{j!}D^j.
\]
Summing over \(j=1,\ldots,r\) gives \(\G_r\).
The construction is reversible.  In the resulting word, the maximal
runs of \(L\)'s are precisely the inserted blocks.  Deleting these runs
recovers \(\pi\); their positions recover the occupied gaps, and their
lengths, read from left to right, recover the composition
\((a_1,\ldots,a_j)\).  Hence every resulting multiset permutation is
counted exactly once.
\end{proof}

\begin{example}[Three equal letters]
Suppose that three copies of a new largest letter are inserted into a multiset permutation.  They form
one, two, or three maximal blocks.  The three contributions are
\[
 xy^3D,
 \qquad
 2x^2y^3\frac{D^2}{2!},
 \qquad
 x^3y^3\frac{D^3}{3!}.
\]
Thus
\[
 \G_3=xy^3D+x^2y^3D^2+\frac{x^3y^3}{6}D^3.
\]
The coefficients \(1,2,1\) count compositions of \(3\) with the specified
number of parts.
\end{example}

\begin{corollary}
For \(\boldsymbol m=(m_1,\ldots,m_n)\), we have $\Hom_{\boldsymbol m}(x,y)
 =\G_{m_n}\cdots\G_{m_1}(x)$.
\end{corollary}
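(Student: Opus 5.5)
The plan is to induct on $n$, the length of the multiplicity vector, using Proposition~\ref{prop:insertion} as the inductive step. The statement concerns $\mathfrak S_{\boldsymbol m}$, which is the set of permutations of $\{1^{m_1},\ldots,n^{m_n}\}$. Hence the relevant order of adjoining letters is $1,2,\ldots,n$. Each newly adjoined letter $k$ is larger than every letter already present, which is exactly the hypothesis under which $\G_{m_k}$ is the insertion operator.

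For the base case we use the empty multiset, where $\Hom_{\varnothing}(x,y)=x$ by convention. Then $\G_{m_1}(x)=\Hom_{(m_1)}(x,y)$, as already noted at the start of the proof of Proposition~\ref{prop:insertion}. If $m_1=0$ this reads $\G_0=\id$, consistent with zero multiplicities being ignored.

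For the inductive step, suppose $\Hom_{(m_1,\ldots,m_{k})}(x,y)=\G_{m_{k}}\cdots\G_{m_1}(x)$. If $m_{k+1}\ge1$, apply Proposition~\ref{prop:insertion} with $r=m_{k+1}$ to the vector $(m_1,\ldots,m_k)$:
\[
 \Hom_{(m_1,\ldots,m_{k+1})}=\G_{m_{k+1}}\Hom_{(m_1,\ldots,m_k)}=\G_{m_{k+1}}\G_{m_k}\cdots\G_{m_1}(x).
\]
If $m_{k+1}=0$, the multiset is unchanged, so the left side equals $\Hom_{(m_1,\ldots,m_k)}$ and $\G_0=\id$ gives equality.

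The only point needing care, and thus the main (minor) obstacle, is bookkeeping with zero multiplicities and the empty word. When some $m_i=0$, the letter $k+1$ is still larger than all letters present, even though earlier letters may be absent. Proposition~\ref{prop:insertion} only needs $L$ to exceed every existing letter, so this causes no trouble. The proposition's proof also covers the case where the existing word is empty, via $\G_r(x)=xy^r$. With these remarks the induction closes.
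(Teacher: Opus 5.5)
Your proposal is correct and matches the paper's proof. Both start from $\Hom_{\varnothing}(x,y)=x$, adjoin the letters $1,2,\ldots,n$ in increasing order so that each new letter is the largest one present, apply Proposition~\ref{prop:insertion} at each step, and handle zero multiplicities via $\G_0=\id$; your explicit induction on $n$ simply formalizes the paper's ``continue in increasing order'' argument.
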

\begin{proof}
Zero multiplicities may be omitted; equivalently, we use
\(\G_0=\id\).
The empty multiset has enumerator \(x\).  Starting from it, insert
\(m_1\) copies of \(1\), then \(m_2\) copies of \(2\), and continue in
increasing order of the letters.  At the \(i\)-th step, the letter \(i\)
is larger than every letter already present, so
Proposition~\ref{prop:insertion} shows that the current enumerator is
transformed by \(\G_{m_i}\).  After the \(n\)-th step, the resulting
multiset is $\{1^{m_1},2^{m_2},\ldots,n^{m_n}\}$.
Successive application of the operators therefore gives
$\Hom_{\boldsymbol m}(x,y)
 =\G_{m_n}\cdots\G_{m_1}(x)$.
\end{proof}

Note that $Dx=Dy=1$ and $D^jx=D^jy=0$ for $j\geq2$.
By~\eqref{eq:intro-Gr}, we see that
$\G_r(x)=\G_r(y)=xy^r$, where $r\geq1$.
Hence
\begin{align*}
 x+\sum_{r\geq1}\G_r(x)z^r
 &=x+\sum_{r\geq1}xy^rz^r
   =\frac{x}{1-yz},\\
 y+\sum_{r\geq1}\G_r(y)z^r
 &=y+\sum_{r\geq1}xy^rz^r
   =\frac{y(1-yz+xz)}{1-yz}.
\end{align*}
We define
\[
 \G(z)=\sum_{r\geq0}\G_rz^r.
\]
The next lemma shows that the same substitution acts on every polynomial.

\begin{lemma}[A substitution]\label{lem:substitution}
For every \(f\in\mathbb Q[x,y]\),
\begin{equation}\label{eq:substitution}
 \G(z)f(x,y)
 =f\left(
     \frac{x}{1-yz},
     \frac{y\bigl(1-yz+xz\bigr)}{1-yz}
   \right),
\end{equation}
where the right-hand side is expanded at \(z=0\).
\end{lemma}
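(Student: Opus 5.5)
Since both sides are linear in $f$, it suffices to prove \eqref{eq:substitution} for monomials. The right-hand side, $\sigma_z(f)=f(X,Y)$ with $X=x/(1-yz)$ and $Y=y(1-yz+xz)/(1-yz)$, is a ring homomorphism $\mathbb Q[x,y]\to\mathbb Q[x,y][[z]]$. The plan is to show that $\G(z)$ is also multiplicative, i.e.
\[
\G(z)(fg)=\bigl(\G(z)f\bigr)\bigl(\G(z)g\bigr),
\]
and then compare the two maps on the generators $x$ and $y$. There they agree by the two generating series computed just before the lemma. Two ring homomorphisms that agree on generators agree on all of $\mathbb Q[x,y]$.

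To get multiplicativity, write $\G(z)$ in closed form as an operator. From \eqref{eq:intro-Gr},
\[
\G(z)=\id+\sum_{j\ge1}\frac{x^jD^j}{j!}\sum_{r\ge j}\binom{r-1}{j-1}y^rz^r
=\sum_{j\ge0}\frac{1}{j!}\Bigl(\frac{xyz}{1-yz}\Bigr)^jD^j,
\]
using $\sum_{r\ge j}\binom{r-1}{j-1}w^r=(w/(1-w))^j$ with $w=yz$. Set $c=xyz/(1-yz)$. The key point is that $x$ and $y$ in the coefficient $c$ are frozen: in each $\G_r$ the multiplication by powers of $x,y$ is applied after $D^j$. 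Hence, for a fixed polynomial $f$,
\[
\G(z)f=\sum_j \frac{c^j}{j!}(D^jf)(x,y)=f(x+c,\,y+c)
\]
by Taylor's theorem for the translation operator $e^{cD}$. This is an exact identity because $f$ is a polynomial, and it holds in $\mathbb Q[x,y][[z]]$ since $c$ is divisible by $z$.

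This already gives the lemma directly, without needing multiplicativity separately:
\[
x+c=\frac{x(1-yz)+xyz}{1-yz}=\frac{x}{1-yz},\qquad
y+c=\frac{y(1-yz)+xyz}{1-yz}=\frac{y(1-yz+xz)}{1-yz}.
\]
These match the arguments in \eqref{eq:substitution}.

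The main point needing care is the justification of $\sum_j c^jD^jf/j!=f(x+c,y+c)$ when $c$ itself involves $x$ and $y$. I would handle this by introducing independent variables: apply the Taylor expansion $f(x+s,y+s)=\sum_j s^j(D^jf)(x,y)/j!$ with a new indeterminate $s$, and then specialize $s=c$. This specialization is legitimate in $\mathbb Q[x,y][[z]]$ because only finitely many $j$ contribute for a polynomial $f$, and the operator ordering in \eqref{eq:intro-Gr} (coefficients to the left of $D^j$) is exactly this specialization. The remaining checks are the binomial series identity and the two rational simplifications, both of which are routine.
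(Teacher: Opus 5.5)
Your proof is correct and is the paper's argument: resum \eqref{eq:intro-Gr} into $\G(z)=\sum_{j\ge0}\frac{1}{j!}\bigl(\frac{xyz}{1-yz}\bigr)^jD^j$, apply Taylor's formula with an auxiliary indeterminate, specialize that indeterminate to $xyz/(1-yz)$, and simplify the two arguments. The multiplicativity plan you open with is not needed, as you note yourself, so you can drop it.
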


\begin{proof}
Starting from \eqref{eq:intro-Gr} and changing the order of summation gives
\begin{align*}
 \G(z)
 &=\id+
   \sum_{r\geq1}\sum_{j=1}^r
   \binom{r-1}{j-1}\frac{x^jy^rz^r}{j!}D^j\\
 &=\id+
   \sum_{j\geq1}\frac{x^j}{j!}D^j
   \sum_{r\geq j}\binom{r-1}{j-1}(yz)^r\\
 &=\id+
   \sum_{j\geq1}\frac1{j!}
   \left(\frac{xyz}{1-yz}\right)^jD^j\\
 &=\sum_{j\geq0}\frac1{j!}
   \left(\frac{xyz}{1-yz}\right)^jD^j.
\end{align*}

Let \(\eta\) be an auxiliary indeterminate independent of \(x\) and
\(y\).  Since \(f\) is a polynomial, Taylor's formula gives
\[
 \sum_{j\geq0}\frac{\eta^j}{j!}D^jf(x,y)
 =f(x+\eta,y+\eta).
\]
Applying the specialization
\[
 \eta\longmapsto\frac{xyz}{1-yz}
\]
to this polynomial identity gives
\[
\begin{aligned}
 \G(z)f(x,y)
 &=
 f\left(
   x+\frac{xyz}{1-yz},
   y+\frac{xyz}{1-yz}
 \right)\\
 &=
 f\left(
   \frac{x}{1-yz},
   \frac{y(1-yz+xz)}{1-yz}
 \right),
\end{aligned}
\]
which proves \eqref{eq:substitution}.
\end{proof}

The generating substitution fixes \(y-x\), while homogeneity reduces
the two variables to their ratio.  Set
\[
 \delta=y-x,\qquad u=\frac{x}{y},\qquad E=u\partial_u.
\]

\begin{lemma}[Change of variables]\label{lem:coordinates}
We have
\[
 x=\frac{\delta u}{1-u},
 \qquad
 y=\frac{\delta}{1-u}.
\]
Under the substitution of Lemma~\ref{lem:substitution},
\[
 \delta\longmapsto\delta,
 \qquad
 u\longmapsto\frac{u}{1-\delta z}.
\]
Moreover, $\G_1=\delta E$
on \(\mathbb Q(\delta,u)\).  In particular, \(E\) commutes with
multiplication by \(\delta\).
\end{lemma}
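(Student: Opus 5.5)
The proof is a direct computation in three parts, one for each claim of Lemma~\ref{lem:coordinates}.

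\textbf{Inverting the coordinates.} From $u=x/y$ we get $x=uy$. Then $\delta=y-x=y(1-u)$, so
\[
 y=\frac{\delta}{1-u},\qquad x=\frac{\delta u}{1-u}.
\]
This identifies $\mathbb Q(x,y)$ with $\mathbb Q(\delta,u)$.

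\textbf{The substitution.} Write $x'=x/(1-yz)$ and $y'=y(1-yz+xz)/(1-yz)$ for the substituted variables of Lemma~\ref{lem:substitution}. For $\delta$, the numerator of $y'-x'$ is
\[
 y-y^2z+xyz-x=(y-x)(1-yz),
\]
so $y'-x'=y-x=\delta$. For $u$, the factors $1-yz$ cancel, giving
\[
 \frac{x'}{y'}=\frac{x}{y(1-yz+xz)}.
\]
Next we rewrite $1-yz+xz=1-\delta z$, which gives $u\mapsto u/(1-\delta z)$. All of this holds as an identity of formal power series in $z$ with coefficients in $\mathbb Q(x,y)$. Substitution by $f\mapsto f(x',y')$ is a ring homomorphism, so it is enough to compute its effect on the generators.

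\textbf{The formula $\G_1=\delta E$.} Here $\G_1$ is the coefficient of $z$ in $\G(z)$, that is, the derivation $xyD$. Since $xyD$ and $\delta E$ are both derivations of $\mathbb Q(x,y)=\mathbb Q(\delta,u)$, it suffices to check that they agree on $\delta$ and on $u$.
\begin{itemize}
\item On $\delta$: $D\delta=D(y-x)=0$, and $E\delta=0$ because $\delta$ and $u$ are independent coordinates.
\item On $u$: $D(x/y)=1/y-x/y^2=(y-x)/y^2$. Hence $xyD(u)=x\delta/y=\delta u$, while $\delta E(u)=\delta u$.
\end{itemize}
One can also read this off from the substitution: the coefficient of $z$ in $u/(1-\delta z)$ is $\delta u$, and in $\delta$ it is $0$.

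\textbf{Commutation with $\delta$.} In the coordinates $(\delta,u)$, $E=u\partial_u$ differentiates only in $u$. Therefore $E(\delta g)=\delta\, E g$ for every $g\in\mathbb Q(\delta,u)$.

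\textbf{Main obstacle.} The only step that needs care is extending $\G_1=xyD$ from polynomials to rational functions. This is immediate because a derivation of $\mathbb Q[x,y]$ extends uniquely to a derivation of $\mathbb Q(x,y)$, and $xyD$ is a derivation.
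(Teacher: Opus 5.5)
Your proposal is correct and follows essentially the same route as the paper. You invert $u=x/y$, check directly that $y-x\mapsto\delta$ and $x/y\mapsto u/(1-\delta z)$, and verify $\G_1=\delta E$ from $D\delta=0$ and $Du=\delta/y^2$. Your ``two derivations agreeing on the generators $\delta,u$'' argument is the paper's chain-rule computation $xyDF=\delta uF_u$ for $F=F(\delta,u)$, phrased abstractly.
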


\begin{proof}
Since \(x=uy\), we have $\delta=y-x=y(1-u)$.
Therefore $y=\frac{\delta}{1-u}$.
Multiplying by \(u\) gives
\[
 x=uy=\frac{\delta u}{1-u}.
\]
Under the substitution of Lemma~\ref{lem:substitution},
\[
\begin{aligned}
 y-x
 &\longmapsto
 \frac{y(1-\delta z)-x}{1-yz}
 =\delta,\\
 \frac{x}{y}
 &\longmapsto
 \frac{x}{y(1-\delta z)}
 =\frac{u}{1-\delta z}.
\end{aligned}
\]
Note that $(\partial_x+\partial_y)\delta=0$ and $(\partial_x+\partial_y)u
 =\frac1y-\frac{x}{y^2}
 =\frac{\delta}{y^2}$.
Thus, for \(F=F(\delta,u)\),
\[
 \G_1F
 =xy(\partial_x+\partial_y)F
 =\delta uF_u
 =\delta EF.
\]
Hence \(\G_1=\delta E\).  Since \(\delta\) is independent of \(u\), we have \(E(\delta)=0\).
Hence, for every \(F\in\mathbb Q(\delta,u)\),
\[
 E(\delta F)=\delta EF,
\]
so \(E\) commutes with multiplication by \(\delta\).
\end{proof}

\subsection{The factorial formula and the composition law}
\hspace*{\parindent}

We are now ready to present the first main result of this paper.
\begin{theorem}[The factorial formula]\label{thm:factorial}
In the coordinates \((\delta,u)\), the following identity holds as a
formal power series in \(z\):
\begin{equation*}
 \G(z)
 =\exp\bigl(-\log(1-\delta z)E\bigr).
\end{equation*}
Consequently, for $r\geq 1$, 
\begin{equation}\label{eq:rising-factorial}
 \G_r
 =\frac{\delta^r}{r!}E(E+1)\cdots(E+r-1).
\end{equation}
Equivalently, on \(\mathbb Q[x,y]\),
\begin{equation}\label{eq:operator-factorization}
 r!\G_r
 =\prod_{j=0}^{r-1}(xyD+j(y-x)),
\end{equation}
where \(y-x\) acts by multiplication.
In particular,  the operators
\(\G_r\) commute pairwise.
\end{theorem}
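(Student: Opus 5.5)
The plan is to read the exponential formula off the substitution in Lemma~\ref{lem:substitution}, rewritten in the coordinates of Lemma~\ref{lem:coordinates}, and then extract coefficients of $z$.

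\emph{The exponential form.} By Lemma~\ref{lem:coordinates}, for $f\in\mathbb Q[x,y]$ regarded as $F(\delta,u)$, Lemma~\ref{lem:substitution} says
\[
 \G(z)F(\delta,u)=F\!\left(\delta,\frac{u}{1-\delta z}\right).
\]
The right-hand side is $F(\delta,\lambda u)$ with $\lambda=(1-\delta z)^{-1}=e^{s}$ and $s=-\log(1-\delta z)$. Since $E=u\partial_u$ is the Euler operator, $E(u^k)=ku^k$. Hence $e^{sE}u^k=e^{sk}u^k=(\lambda u)^k$. Because $E$ commutes with multiplication by $\delta$ (Lemma~\ref{lem:coordinates}), and $s$ is a power series in $\delta z$ with zero constant term, $\exp(sE)$ is a well-defined formal power series in $z$. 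It acts by $F(\delta,u)\mapsto F(\delta,e^{s}u)$.

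A polynomial $f$ becomes a rational function of $u$, namely a polynomial in $\delta$, $u$ and $(1-u)^{-1}$. To justify the formula on such functions, I would use homogeneity: a homogeneous $f$ of degree $d$ equals $y^d g(u)$ with $g$ a polynomial, and $y^d=\delta^d(1-u)^{-d}$. I would then verify $\exp(sE)$ coefficientwise in $z$. The cleanest route is to note that both sides are given by the same substitution: $\exp(sE)$ is an algebra automorphism, since $E$ is a derivation, and it sends $u\mapsto e^s u$ and $\delta\mapsto\delta$. These are exactly the images given by Lemma~\ref{lem:coordinates}, so the two automorphisms agree on generators. I expect this justification (the derivation/automorphism argument over formal power series in $z$) to be the main technical obstacle.

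\emph{Extracting $\G_r$.} Write
\[
 \exp(sE)=(1-\delta z)^{-E}=\sum_{r\ge0}\binom{E+r-1}{r}\delta^r z^r,
\]
using the binomial series for a commuting symbol $E$. This is legitimate because $E$ commutes with $\delta$. Alternatively, check the identity on each eigenvector $u^k$, where it reduces to the scalar identity $(1-\delta z)^{-k}=\sum_r\binom{k+r-1}{r}\delta^rz^r$. Comparing coefficients of $z^r$ gives \eqref{eq:rising-factorial}.

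\emph{The factorization on $\mathbb Q[x,y]$.} Since $\G_1=\delta E$ and $\delta$ commutes with $E$,
\[
 \delta^r E(E+1)\cdots(E+r-1)=\prod_{j=0}^{r-1}(\delta E+j\delta)=\prod_{j=0}^{r-1}\bigl(xyD+j(y-x)\bigr),
\]
which is \eqref{eq:operator-factorization}.

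\emph{Commutativity.} Every $\G_r$ is a polynomial in the commuting pair $\delta$ and $E$, so the $\G_r$ commute pairwise.
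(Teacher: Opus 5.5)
Your proof is correct, and its skeleton is the paper's: rewrite Lemma~\ref{lem:substitution} in the coordinates $(\delta,u)$, identify $\exp(sE)$ with the dilation $u\mapsto e^{s}u$ via $E(u^k)=ku^k$, extract the coefficient of $z^r$, and finish with $\G_1=\delta E$. The real difference is in the step you flagged: passing from the monomials $u^k$ to all of $\mathbb Q[x,y]$, which lies in $R=\mathbb Q[\delta,u,(1-u)^{-1}]$ rather than in $\mathbb Q[\delta,u]$.

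The paper expands each $f$ as $\sum_m c_m(\delta)u^m$ in $\mathbb Q[\delta][[u]]$. It then compares the two operators term by term on $u^m$ and concludes from the injectivity of this expansion. You instead compare two ring homomorphisms. By Lemma~\ref{lem:substitution}, $\G(z)$ is the substitution $x\mapsto x/(1-yz)$, $y\mapsto y(1-yz+xz)/(1-yz)$. The map $\exp(sE)$ is multiplicative on $R[[z]]$ by the Leibniz rule, because $E$ is a $\mathbb Q[\delta]$-linear derivation of $R$ and $s=-\log(1-\delta z)$ lies in $z\,\mathbb Q[\delta][[z]]$.

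One precision: the generators on which the two maps must agree are $x=\delta u(1-u)^{-1}$ and $y=\delta(1-u)^{-1}$, not $\delta$ and $u$, since $u\notin\mathbb Q[x,y]$. Multiplicativity forces the image of $(1-u)^{-1}$, so this check is just Lemma~\ref{lem:coordinates} read backwards.

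What each route buys: yours avoids the expansion in $u$ and the injectivity argument. It also exhibits $\G(z)$ as the flow of the Euler field $E$ at time $s(z)=-\log(1-\delta z)$, so the composition law follows at once from $s(z)+s(v)=s(z+v-\delta zv)$. The paper's route needs nothing about exponentials of derivations, only eigenvalues on monomials, and the same expansion is reused to read off $\G_r$.

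Your binomial-series extraction of $\G_r$ is also sound, since each $z^r$-coefficient is a polynomial identity in the commuting symbol $E$ over $\mathbb Q[\delta]$.
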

\begin{proof}
Let \(f\in\mathbb Q[x,y]\).  After substituting
\[
 x=\frac{\delta u}{1-u},
 \qquad
 y=\frac{\delta}{1-u},
\]
the resulting expression belongs to
\(\mathbb Q[\delta,u,(1-u)^{-1}]\).  Since \(1-u\) has constant term
\(1\), it is invertible in the formal power-series ring
\(\mathbb Q[\delta][[u]]\).  Hence there are unique polynomials
\(c_m(\delta)\in\mathbb Q[\delta]\) such that
\[
  f\left(\frac{\delta u}{1-u},
          \frac{\delta}{1-u}\right)
  =
  \sum_{m\geq0}c_m(\delta)u^m.
\]
 This is the formal expansion at
\(u=0\). 
The change of variables is birational: its inverse is
\[
  \delta=y-x,
  \qquad
  u=\frac{x}{y}.
\]
Thus \(\mathbb Q(x,y)=\mathbb Q(\delta,u)\), and the map which sends
\(f(x,y)\) to its formal expansion above is injective.
By Lemma~\ref{lem:coordinates}, the generating operator \(\G(z)\)
fixes \(\delta\) and replaces \(u\) by \(u/(1-\delta z)\).  Therefore,
\[
  \G(z)f
  =
  \sum_{m\geq0}
  c_m(\delta)
  \left(\frac{u}{1-\delta z}\right)^m.
\]

Set $a(z)=-\log(1-\delta z)$. Since \(a(z)\) depends only on \(\delta\), Lemma~\ref{lem:coordinates}
shows that multiplication by \(a(z)\) commutes with \(E\).
Then the formal operator series
\[
  \exp\bigl(a(z)E\bigr)
  =
  \sum_{n\geq0}\frac{a(z)^nE^n}{n!}
\]
is well defined: for each fixed power of \(z\), only finitely many
values of \(n\) contribute.  Since \(E(u^m)=mu^m\), we have
\[
\begin{aligned}
  \exp\bigl(a(z)E\bigr)u^m
  &=
  \sum_{n\geq0}
    \frac{a(z)^nm^n}{n!}u^m\\
  &=
  \exp\bigl(ma(z)\bigr)u^m\\
  &=
  (1-\delta z)^{-m}u^m.
\end{aligned}
\]
Consequently,
\[
\begin{aligned}
  \exp\bigl(-\log(1-\delta z)E\bigr)f
  &=
  \sum_{m\geq0}
  c_m(\delta)(1-\delta z)^{-m}u^m\\
  &=
  \sum_{m\geq0}
  c_m(\delta)
  \left(\frac{u}{1-\delta z}\right)^m\\
  &=
  \G(z)f.
\end{aligned}
\]
The injectivity noted above now proves the operator identity on
\(\mathbb Q[x,y]\).
Taking the coefficient of \(z^r\) in the preceding identity gives
\[
\begin{aligned}
  \G_r f
  &=
  \delta^r\sum_{m\geq0}
  \binom{m+r-1}{r}c_m(\delta)u^m\\
  &=
  \frac{\delta^r}{r!}
  E(E+1)\cdots(E+r-1)
  \sum_{m\geq0}c_m(\delta)u^m.
\end{aligned}
\]
Hence
\[
  \G_r
  =
  \frac{\delta^r}{r!}E(E+1)\cdots(E+r-1).
\]

Finally, Lemma~\ref{lem:coordinates} gives
\[
 \delta E=\G_1=xyD.
\]
Therefore $\delta(E+j)=xyD+j(y-x)$.
Multiplying these identities for \(j=0,1,\ldots,r-1\) gives
\eqref{eq:operator-factorization}.  Since \(E\) commutes with
multiplication by \(\delta\), formula \eqref{eq:rising-factorial} shows
that all the operators \(\G_r\) commute.
\end{proof}

The normalized coefficients \(\delta^{-r}\G_r\) are the divided rising
factorials in \(E\).  This is the classical factorial sequence of finite
operator calculus \cite{RomanRota,RotaKahanerOdlyzko}.
The point here is that the operators for inserting copies of a new largest letter are precisely
these divided rising factorials.
The explicit substitution also determines the composition of two insertions.

\begin{theorem}[Composition law]
For independent indeterminates \(z\) and \(v\),
\begin{equation*}
 \G(z)\G(v)=\G(z+v-\delta zv).
\end{equation*}
\end{theorem}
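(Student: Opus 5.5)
The natural route is to use the substitution description of $\G(z)$ from Lemma~\ref{lem:coordinates}, or equivalently the exponential form in Theorem~\ref{thm:factorial}. In the coordinates $(\delta,u)$, $\G(z)$ fixes $\delta$ and sends $u\mapsto u/(1-\delta z)$. Composing two such substitutions, $\G(v)$ followed by $\G(z)$, should give a single substitution of the same shape. Since $\delta$ is fixed throughout, both orders of composition give
\[
 u\longmapsto \frac{u}{(1-\delta z)(1-\delta v)}.
\]
We then need to recognize $(1-\delta z)(1-\delta v)$ as $1-\delta w$ with $w=z+v-\delta zv$, and expanding the product confirms this immediately.

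To make the argument rigorous I would rather work with the exponential formula, which avoids subtleties about the order in which substitutions are applied. By Theorem~\ref{thm:factorial},
\[
 \G(z)=\exp\bigl(a(z)E\bigr),\qquad a(z)=-\log(1-\delta z).
\]
Because $a(z)$ and $a(v)$ depend only on $\delta$, and $E$ commutes with multiplication by $\delta$ (Lemma~\ref{lem:coordinates}), the operators $a(z)E$ and $a(v)E$ commute. Hence
\[
 \G(z)\G(v)=\exp\bigl((a(z)+a(v))E\bigr).
\]
Next,
\[
 a(z)+a(v)=-\log\bigl((1-\delta z)(1-\delta v)\bigr)=-\log\bigl(1-\delta(z+v-\delta zv)\bigr)=a(z+v-\delta zv),
\]
as an identity of formal power series in $z,v$ with coefficients in $\mathbb Q[\delta]$. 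The substitution is legitimate because $w=z+v-\delta zv$ has no constant term.

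Concretely, I would check the identity on the expansion $f=\sum_m c_m(\delta)u^m$, exactly as in the proof of Theorem~\ref{thm:factorial}. Both sides send $u^m$ to $\bigl((1-\delta z)(1-\delta v)\bigr)^{-m}u^m$. The injectivity of $f\mapsto$ (its expansion) then transfers the identity back to $\mathbb Q[x,y]$.

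The main obstacle is making sense of $\G(z+v-\delta zv)$. Here $\delta$ is an operator of multiplication on $\mathbb Q[x,y]$, and it appears inside the argument of a formal series. I would interpret the right-hand side as $\sum_r \G_r\,(z+v-\delta zv)^r$, with $\delta$ commuting with every $\G_r$, which holds by \eqref{eq:rising-factorial}. This is well defined because each monomial $z^iv^k$ receives contributions only from $r\le i+k$. With that convention, coefficientwise evaluation on $u^m$ gives $(1-\delta w)^{-m}u^m$, and the computation above closes the argument.
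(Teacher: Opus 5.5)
Your proposal is correct and follows essentially the paper's route. The paper also uses Theorem~\ref{thm:factorial} and Lemma~\ref{lem:coordinates} to see that applying $\G(v)$ and then $\G(z)$ fixes $\delta$ and sends $u\mapsto u/\bigl((1-\delta z)(1-\delta v)\bigr)$. It then invokes $(1-\delta z)(1-\delta v)=1-\delta(z+v-\delta zv)$ and the vanishing constant term of $z+v-\delta zv$. Your additivity $a(z)+a(v)=a(z+v-\delta zv)$ in the exponent, together with your explicit check on $u^m$ (treating $\delta$ as multiplication commuting with every $\G_r$), is just a more carefully justified packaging of the same computation.
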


\begin{proof}
According to Theorem~\ref{thm:factorial}, we have $\G(z)=\exp\bigl(-\log(1-\delta z)E\bigr)$.

By Lemma~\ref{lem:coordinates}, \(\G(v)\) fixes \(\delta\) and sends
\[
  u\longmapsto\frac{u}{1-\delta v}.
\]
Applying \(\G(z)\) afterward sends \(u\) to
\[
  \frac{u}{(1-\delta z)(1-\delta v)}.
\]
Since $(1-\delta z)(1-\delta v)
  =
  1-\delta(z+v-\delta zv)$,
the composite substitution is the substitution defining
\(\G(z+v-\delta zv)\). The series \(z+v-\delta zv\) has zero constant term, so substitution
into \(\G(w)=\sum_{r\geq0}\G_rw^r\) is well defined.
\end{proof}

Since $E(u^m)=mu^m$,
we have $E^n(u^m)=m^n u^m$.
Hence
\[
\begin{aligned}
\exp\bigl(\log(1-\delta z)E\bigr)u^m
&=
\sum_{n\geq0}
\frac{\bigl(\log(1-\delta z)\bigr)^n}{n!}
E^n(u^m)\\
&=
\sum_{n\geq0}
\frac{\bigl(m\log(1-\delta z)\bigr)^n}{n!}u^m\\
&=
\exp\bigl(m\log(1-\delta z)\bigr)u^m\\
&=
(1-\delta z)^m u^m.
\end{aligned}
\]
By the binomial theorem,
\[
 (1-\delta z)^m
 =
 \sum_{r\geq0}
 (-1)^r\binom{m}{r}\delta^r z^r.
\]
On the other hand,
$E(E-1)\cdots(E-r+1)u^m
 =
 m(m-1)\cdots(m-r+1)u^m$.
Hence
\[
 \exp\bigl(\log(1-\delta z)E\bigr)u^m
 =
 \sum_{r\geq0}
 (-1)^r\frac{\delta^r}{r!}
 E(E-1)\cdots(E-r+1)u^m z^r.
\]
Since the two exponents commute,
\[
  \G(z)\exp\bigl(\log(1-\delta z)E\bigr)=\id.
\]
Hence the second exponential is the formal inverse of \(\G(z)\).
We use the notation
\[
  (1-\delta z)^E
  :=
  \exp\bigl(\log(1-\delta z)E\bigr).
\]
Both sides act coefficientwise on \(\mathbb Q[\delta][[u]]\), so the
calculation on the monomials \(u^m\) determines the operator on the
whole space.
\begin{corollary}
We have
$$\G(z)^{-1}
 =(1-\delta z)^E=
 \sum_{r\geq0}
 (-1)^r\frac{\delta^r}{r!}
 E(E-1)\cdots(E-r+1)z^r.$$
\end{corollary}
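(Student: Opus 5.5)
The plan is to work in the coordinates of Lemma~\ref{lem:coordinates}, on the space $\mathbb Q[\delta][[u]]$ into which $\mathbb Q[x,y]$ embeds injectively, as in the proof of Theorem~\ref{thm:factorial}. Both $\G(z)$ and the proposed inverse act diagonally on the monomials $u^m$ with coefficients that are power series in $z$ over $\mathbb Q[\delta]$. So it suffices to compute each operator on $u^m$. This is legitimate because both operators act coefficientwise in $u$, and for each fixed power of $z$ only finitely many terms of the exponential series contribute.

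The first step is to establish the left equality $\G(z)^{-1}=(1-\delta z)^E$. By Theorem~\ref{thm:factorial}, $\G(z)u^m=(1-\delta z)^{-m}u^m$. The computation displayed just before the corollary gives $(1-\delta z)^E u^m=(1-\delta z)^m u^m$. Composing in either order, $E$ commutes with multiplication by functions of $\delta$ alone (Lemma~\ref{lem:coordinates}), so both exponents $\pm\log(1-\delta z)E$ commute. Hence the composite sends $u^m$ to $u^m$. This gives $\G(z)(1-\delta z)^E=(1-\delta z)^E\G(z)=\id$ on $\mathbb Q[\delta][[u]]$, and therefore on $\mathbb Q[x,y]$. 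Since $\G(z)$ has constant term $\G_0=\id$, it is invertible as a formal power series in $z$ with operator coefficients, and its two-sided inverse is unique. That inverse is $(1-\delta z)^E$.

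The second step is to expand $(1-\delta z)^E$ in powers of $z$. On $u^m$, the binomial theorem gives $(1-\delta z)^m=\sum_r(-1)^r\binom{m}{r}\delta^rz^r$. Also $\binom{m}{r}u^m=\frac1{r!}E(E-1)\cdots(E-r+1)u^m$, since the falling factorial of $E$ acts on $u^m$ by $m(m-1)\cdots(m-r+1)$. This holds including the case $r>m$, where both sides vanish. Matching coefficients of $z^r$ on every $u^m$ gives the stated series. Because $\delta^r$ commutes with $E$, its placement is immaterial.

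The only delicate point is the bookkeeping that justifies passing from the monomial computation to an operator identity on $\mathbb Q[x,y]$. I would handle it by the same injectivity argument used in Theorem~\ref{thm:factorial}. There is a further subtlety: the coefficient operators $\frac{\delta^r}{r!}E(E-1)\cdots(E-r+1)$ might not preserve $\mathbb Q[x,y]$. To settle this, I would note that they are finite polynomials in $E=\G_1/\delta$ times $\delta^r$, namely $\prod_{j=0}^{r-1}(\G_1-j\delta)/r!$. Hence they do map polynomials to polynomials, and the identity holds there too.
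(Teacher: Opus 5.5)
Your proposal is correct and follows the paper's own argument: compute $\G(z)$ and $(1-\delta z)^E$ on the eigenvectors $u^m$, use the commutation of the two exponents to obtain $\G(z)(1-\delta z)^E=\id$, and match the binomial expansion of $(1-\delta z)^m$ against the falling factorial of $E$. Your extra observation that the coefficients equal $\frac{1}{r!}\prod_{j=0}^{r-1}(\G_1-j\delta)$, and so preserve $\mathbb Q[x,y]$, is a detail the paper leaves implicit.
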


\begin{corollary}\label{cor:product-law}
For \(a,b\geq0\),
\[
 \G_a\G_b
 =\sum_{j=0}^{\min(a,b)}(-1)^j
   \frac{(a+b-j)!}{(a-j)!(b-j)!j!}\,
   (y-x)^j\G_{a+b-j},
\]
where \((y-x)^j\) acts by multiplication.
\end{corollary}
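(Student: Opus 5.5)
The plan is to work entirely in the coordinates \((\delta,u)\) of Lemma~\ref{lem:coordinates}. There Theorem~\ref{thm:factorial} writes every \(\G_r\) as \(\delta^r\) times a polynomial in \(E\), and \(E\) commutes with multiplication by \(\delta\). So both sides of the claimed identity are of the form \(\delta^{a+b}P(E)\) for a polynomial \(P\). Indeed, on the right each term \((y-x)^j\G_{a+b-j}=\delta^j\cdot\delta^{a+b-j}(\cdots)\) carries the same power \(\delta^{a+b}\). It therefore suffices to prove an identity of polynomials in \(E\), and, since \(E u^m=mu^m\) and the \(u^m\) span the expansions used in the proof of Theorem~\ref{thm:factorial}, it suffices to check it on each eigenvector \(u^m\), i.e. after replacing \(E\) by an integer \(m\geq0\). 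This reduction is justified by the injectivity argument already given in that proof.

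After dividing by \(\delta^{a+b}\) and using \(\frac{1}{r!}m(m+1)\cdots(m+r-1)=\binom{m+r-1}{r}\), the statement becomes the binomial identity
\[
\binom{m+a-1}{a}\binom{m+b-1}{b}=\sum_{j=0}^{\min(a,b)}(-1)^j\frac{(a+b-j)!}{(a-j)!(b-j)!j!}\binom{m+a+b-j-1}{a+b-j}
\]
for all integers \(m\geq0\). Since both sides are polynomials in \(m\), it is enough to prove it for all \(m\), and then it holds as a polynomial identity in \(E\).

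To prove the binomial identity I would use the composition law \(\G(z)\G(v)=\G(z+v-\delta zv)\) rather than a direct manipulation. On \(u^m\), \(\G(z)\) acts by \((1-\delta z)^{-m}\), so the composition law reduces to \((1-\delta z)^{-m}(1-\delta v)^{-m}=(1-\delta w)^{-m}\) with \(w=z+v-\delta zv\). Expanding the right side as \(\sum_n\binom{m+n-1}{n}\delta^n w^n\) and extracting the coefficient of \(z^av^b\) from
\[
w^n=\sum_{j}\frac{n!}{(n-j)!\,j!}\,(z+v)^{n-j}(-\delta zv)^j
\]
gives the required sum. Only \(n=a+b-j\) contributes, and the coefficient of \(z^{a-j}v^{b-j}\) in \((z+v)^{a+b-2j}\) is \(\binom{a+b-2j}{a-j}\). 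Combining this with \(\frac{(a+b-j)!}{(a+b-2j)!\,j!}\) yields exactly \(\frac{(a+b-j)!}{(a-j)!(b-j)!j!}\). The left side has coefficient \(\binom{m+a-1}{a}\binom{m+b-1}{b}\delta^{a+b}\).

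In fact one can skip the eigenvector step and read the corollary off directly as the \(z^av^b\) coefficient of the composition law at the operator level. This works because \(\delta\) commutes with all \(\G_r\), so substituting \(w\) into \(\G(w)=\sum\G_nw^n\) and expanding is legitimate. The only delicate point is the bookkeeping of the trinomial expansion and the power of \(\delta\): the factor \((-\delta)^j\) becomes \((-1)^j(y-x)^j\). I expect no real obstacle beyond that index check.
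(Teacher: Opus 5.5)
Your proposal is correct, and its operator-level version in your last paragraph is exactly the paper's proof: take the coefficient of $z^av^b$ in $\G(z)\G(v)=\G(z+v-\delta zv)$, where only $r=a+b-j$ contributes, the multinomial count is $\frac{(a+b-j)!}{(a-j)!(b-j)!j!}$, and the factor $(-\delta)^j$ becomes $(-1)^j(y-x)^j$. Your preliminary reduction to a binomial identity on the eigenvectors $u^m$ is valid but unnecessary, since it just repeats the same coefficient extraction on $(1-\delta z)^{-m}(1-\delta v)^{-m}=(1-\delta w)^{-m}$.
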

\begin{proof}
Recall that
$$\G(z+v-\delta zv)
 =\sum_{r\geq0}\G_r(z+v-\delta zv)^r,$$
 where $\delta$ acts as the multiplication by $y-x$. Multiplication by \(y-x\) commutes with every \(\G_r\).
 Take the coefficient of \(z^av^b\) in $\G(z)\G(v)=\G(z+v-\delta zv)$.
In the term $\G_r(z+v-\delta zv)^r$,
the monomial \(z^av^b\) is obtained by choosing, for some \(j\),
\(a-j\) factors \(z\), \(b-j\) factors \(v\), and \(j\) factors
\(-\delta zv\).  Necessarily \(r=a+b-j\), and the number of such
choices is
\[
 \frac{(a+b-j)!}{(a-j)!(b-j)!j!}.
\]
Summing over \(0\leq j\leq\min(a,b)\) gives the result.
\end{proof}

\section{A common symmetric-function source}\label{section3}
\subsection{The ordinary specialization}
\hspace*{\parindent}

The Eulerian insertion operator records the number of descents, whereas a
fundamental quasisymmetric function records their positions. We begin by 
relating these two levels of information. Let \(\mathrm{QSym}_N\) denote the degree-\(N\) homogeneous component
of the ring of quasisymmetric functions.
Let \(z_1,z_2,\ldots\) be commuting variables.  For \(N\geq1\) and
\(S\subseteq[N-1]\), define
\[
 F_{N,S}
 =\sum_{\substack{1\leq i_1\leq\cdots\leq i_N\\
                   j\in S\Rightarrow i_j<i_{j+1}}}
   z_{i_1}\cdots z_{i_N},
\]
and put \(F_{0,\varnothing}=1\).  These fundamental quasisymmetric
functions form a basis of \(\mathrm{QSym}_N\).  For \(r\geq0\), let \(h_r\) denote the complete homogeneous symmetric
function of degree \(r\), defined by
\[
 h_r=\sum_{1\leq i_1\leq\cdots\leq i_r}z_{i_1}\cdots z_{i_r},
 \qquad h_0=1.
\]
Thus \(h_r=F_{r,\varnothing}\). Let $\Phi_N:\mathrm{QSym}_N\longrightarrow\mathbb Q[x,y]$ be
the linear map determined on the fundamental basis by $\Phi_N(F_{N,S})=x^{|S|+1}y^{N-|S|}$, 
and we set $\Phi_0(1)=x$. Thus \(\Phi_N\) forgets the positions of the descents and retains only their number. 
In particular, it sends a descent-set enumerator to its homogeneous Eulerian enumerator.

For \(f\in\mathrm{QSym}_N\), we write $f(1^{k+1})=f(\underbrace{1,\ldots,1}_{k+1},0,0,\ldots)$.
For example, let
\[
f=F_{3,\{1\}}
=\sum_{i_1<i_2\le i_3} z_{i_1}z_{i_2}z_{i_3}.
\]
Then $f(1^3)=F_{3,\{1\}}(1,1,1,0,\ldots)
=4$,
since there are exactly four triples
\[
(1,2,2),\qquad
(1,2,3),\qquad
(1,3,3),\qquad
(2,3,3)
\]
satisfying $1\le i_1<i_2\le i_3\le 3$.
More generally, for a sequence
\[
 1\leq i_1\leq\cdots\leq i_N\leq k+1,
 \qquad
 j\in S\Longrightarrow i_j<i_{j+1},
\]
define
\[
 b_\ell
 =
 i_\ell-\#\{j\in S:j<\ell\}
 \qquad(1\leq\ell\leq N).
\]
Then $1\leq b_1\leq\cdots\leq b_N\leq k+1-|S|$.
This correspondence is reversible, since
\[
 i_\ell
 =
 b_\ell+\#\{j\in S:j<\ell\}.
\]
Hence
\[
 F_{N,S}(1^{k+1})
 =
 \binom{N+k-|S|}{N},
\]
where the binomial coefficient is understood to be zero when \(k<|S|\).

The next lemma is a basic tool for transferring multiplication by complete homogeneous symmetric functions to the Eulerian side.
\begin{lemma}[Finite principal specialization]\label{lem:Phi-principal}
If \(f\in\mathrm{QSym}_N\), then
\[
 \Phi_N(f)=\delta^{N+1}\sum_{k\geq0}f(1^{k+1})u^{k+1}.
\]
The identity is understood in \(\mathbb Q[\delta][[u]]\).
\end{lemma}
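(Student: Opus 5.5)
By linearity of both sides in $f$, it suffices to prove the identity on the fundamental basis $F_{N,S}$, with $|S|=s$, together with the degenerate case $N=0$. Both sides are then explicit. The left side is $x^{s+1}y^{N-s}$ rewritten in the coordinates of Lemma~\ref{lem:coordinates}. The right side is computed from the formula $F_{N,S}(1^{k+1})=\binom{N+k-s}{N}$ just established. So the proof reduces to one formal power series identity in $u$.

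\textbf{Left side.} By Lemma~\ref{lem:coordinates}, $x=\delta u/(1-u)$ and $y=\delta/(1-u)$. Hence
\[
 x^{s+1}y^{N-s}=\frac{\delta^{N+1}u^{s+1}}{(1-u)^{N+1}}.
\]
This expression lies in $\mathbb Q[\delta][[u]]$ after expanding $(1-u)^{-(N+1)}$ at $u=0$.

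\textbf{Right side.} Using $F_{N,S}(1^{k+1})=\binom{N+k-s}{N}$, which vanishes for $k<s$, set $m=k-s\ge 0$. Then
\[
 \delta^{N+1}\sum_{k\ge0}\binom{N+k-s}{N}u^{k+1}
 =\delta^{N+1}u^{s+1}\sum_{m\ge0}\binom{N+m}{N}u^{m}
 =\frac{\delta^{N+1}u^{s+1}}{(1-u)^{N+1}}.
\]
The last step is the standard negative binomial expansion $\sum_{m\ge0}\binom{N+m}{N}u^m=(1-u)^{-(N+1)}$. The two sides agree, and linearity finishes the case $N\ge1$.

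\textbf{The case $N=0$.} Here $f$ is a scalar multiple of $1$, and $1(1^{k+1})=1$ for every $k$. The right side is
\[
 \delta\sum_{k\ge0}u^{k+1}=\frac{\delta u}{1-u}=x=\Phi_0(1),
\]
as required. (This is also just the $N=0$, $s=0$ instance of the formula above.)

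The only delicate points are two matters of bookkeeping rather than real obstacles. First, the convention that the binomial coefficient is zero for $k<|S|$ must be handled correctly, which the shift $m=k-s$ does. Second, the identity lives in $\mathbb Q[\delta][[u]]$, where the expansion of a polynomial in $x,y$ is injective, as noted in the proof of Theorem~\ref{thm:factorial}. So the comparison of power series genuinely identifies the two sides.
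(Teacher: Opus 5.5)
Your proposal is correct and follows essentially the same route as the paper: you reduce by linearity to $F_{N,S}$, use $F_{N,S}(1^{k+1})=\binom{N+k-|S|}{N}$ with the negative binomial series to obtain $\delta^{N+1}u^{|S|+1}/(1-u)^{N+1}$, and identify this with $x^{|S|+1}y^{N-|S|}$ through the change of variables. Your separate treatment of $N=0$ is a harmless extra check, since the paper's computation already covers that case with $S=\varnothing$ and $F_{0,\varnothing}=1$.
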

\begin{proof}
By linearity, it is enough to take \(f=F_{N,S}\).
Note that
\[
 \sum_{k\geq0}F_{N,S}(1^{k+1})u^k
 =\sum_{k\geq0}\binom{N+k-|S|}{N}u^k
 =\frac{u^{|S|}}{(1-u)^{N+1}}.
\]
Multiplying by \(u\delta^{N+1}\), and using
\(\delta=y(1-u)\) and \(u=x/y\), gives
\[
 u\delta^{N+1}\frac{u^{|S|}}{(1-u)^{N+1}}
 =x^{|S|+1}y^{N-|S|}
 =\Phi_N(F_{N,S}).
\]
Since the functions \(F_{N,S}\) form a basis of
\(\mathrm{QSym}_N\), the identity follows for every
\(f\) by linearity.
\end{proof}

\begin{theorem}[Multiplication by $h_r$]\label{thm:pieri-specialization}
For $N,r\geq0$ and $f\in\mathrm{QSym}_N$, we have
$$\Phi_{N+r}(h_rf)=\G_r\Phi_N(f).$$
\end{theorem}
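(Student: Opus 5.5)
Since multiplication by $h_r$ is a ring operation, evaluation at $1^{k+1}$ is multiplicative: $(h_rf)(1^{k+1})=h_r(1^{k+1})\,f(1^{k+1})=\binom{k+r}{r}f(1^{k+1})$, because $h_r=F_{r,\varnothing}$ and the count above gives $F_{r,\varnothing}(1^{k+1})=\binom{r+k}{r}$. The plan is to combine this with Lemma~\ref{lem:Phi-principal} on both sides and with the factorial formula of Theorem~\ref{thm:factorial}. Everything is then reduced to how $\delta^{-r}\G_r$ acts on monomials $u^{k+1}$, namely through a rising factorial in $E$.

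First dispose of $r=0$ (trivial, $\G_0=\id$, $h_0=1$). Next handle $N=0$ separately, since $\Phi_0(1)=x$ is given by fiat; it should still agree with Lemma~\ref{lem:Phi-principal}, because $\delta\sum_{k\ge0}u^{k+1}=\delta u/(1-u)=x$. Thus the lemma may be used uniformly for all $N\ge0$. For general $N$, write
\[
 \Phi_N(f)=\delta^{N+1}\sum_{k\ge0}f(1^{k+1})u^{k+1}.
\]
Apply \eqref{eq:rising-factorial}. Since $E$ commutes with multiplication by $\delta$ (Lemma~\ref{lem:coordinates}) and $E(E+1)\cdots(E+r-1)u^{m}=m(m+1)\cdots(m+r-1)u^m$, we get
\[
 \G_r\Phi_N(f)=\delta^{N+r+1}\sum_{k\ge0}\binom{k+r}{r}f(1^{k+1})u^{k+1}.
\]
Here we use $\frac{1}{r!}(k+1)(k+2)\cdots(k+r)=\binom{k+r}{r}$ with $m=k+1$.

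On the other side, $h_rf\in\mathrm{QSym}_{N+r}$, so Lemma~\ref{lem:Phi-principal} gives
\[
 \Phi_{N+r}(h_rf)=\delta^{N+r+1}\sum_{k\ge0}(h_rf)(1^{k+1})u^{k+1}.
\]
By multiplicativity of specialization this equals the previous display, proving the theorem.

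The main subtlety is justifying that $\G_r$, a differential operator on $\mathbb Q[x,y]$, may be computed termwise on the formal expansion in $\mathbb Q[\delta][[u]]$. I would invoke the proof of Theorem~\ref{thm:factorial}: it establishes exactly that, on the injective expansion map $f(x,y)\mapsto\sum c_m(\delta)u^m$, $\G_r$ acts coefficientwise by $\delta^r\binom{m+r-1}{r}$. Injectivity then transfers the equality back to $\mathbb Q[x,y]$. The only other point to check is that $h_rf$ lies in $\mathrm{QSym}_{N+r}$, which holds since QSym is a graded ring containing the symmetric functions.
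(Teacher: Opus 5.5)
Your proposal is correct and follows essentially the same route as the paper: you expand both sides via Lemma~\ref{lem:Phi-principal}, use multiplicativity of evaluation at $1^{k+1}$ with $h_r(1^{k+1})=\binom{k+r}{r}$, and match this against the eigenvalue of $\frac{1}{r!}E(E+1)\cdots(E+r-1)$ on $u^{k+1}$. Your check that the case $N=0$ agrees with the lemma ($\delta u/(1-u)=x$) and your appeal to the coefficientwise action established in the proof of Theorem~\ref{thm:factorial} are small extra justifications for points the paper only states briefly.
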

\begin{proof}
The case \(r=0\) follows from \(h_0=1\) and \(\G_0=\id\). Assume henceforth that \(r\geq1\).
The proof is based on the fact that multiplication by \(h_r\) and the
operator \(\G_r\) produce the same factor on each term in the principal
specialization expansion of \(f\).

The term indexed by \(k\) in Lemma~\ref{lem:Phi-principal} is
$\delta^{N+1}f(1^{k+1})u^{k+1}$.
We first examine what happens on the symmetric-function side.
Evaluation at a fixed alphabet preserves products, so
\[
 (h_rf)(1^{k+1})
 =
 h_r(1^{k+1})f(1^{k+1})
\]
Now \(h_r(1^{k+1})\) counts weakly increasing sequences
$1\leq i_1\leq\cdots\leq i_r\leq k+1$,
or, equivalently, multisets of size \(r\) chosen from \(k+1\) symbols.
Thus $h_r(1^{k+1})=\binom{k+r}{r}$.
Applying Lemma~\ref{lem:Phi-principal} in degree \(N+r\), we therefore obtain
\[
 \Phi_{N+r}(h_rf)
 =
 \delta^{N+r+1}
 \sum_{k\geq0}
 \binom{k+r}{r}f(1^{k+1}) u^{k+1}.
\]

On the other hand, Theorem~\ref{thm:factorial} gives 
\[
 \G_r
 =
 \frac{\delta^r}{r!}
 E(E+1)\cdots(E+r-1),
 \qquad
 E=u\partial_u.
\]
The monomial \(u^{k+1}\) is an eigenvector of \(E\), since
$E(u^{k+1})=(k+1)u^{k+1}$.
Consequently, for $j\geq0$,
\[
 (E+j)u^{k+1}
 =
 (k+1+j)u^{k+1},
\]
and hence
\[
 \begin{aligned}
 \frac1{r!}
 E(E+1)\cdots(E+r-1)u^{k+1}
 &=
 \frac{(k+1)(k+2)\cdots(k+r)}{r!}\,u^{k+1}\\
 &=
 \binom{k+r}{r}u^{k+1}.
 \end{aligned}
\]
Therefore
\[
 \G_r\bigl(\delta^{N+1}f(1^{k+1}) u^{k+1}\bigr)
 =
 \delta^{N+r+1}
 \binom{k+r}{r}f(1^{k+1})u^{k+1}.
\]
Since \(\G_r\) is a polynomial in \(E\) with coefficient
\(\delta^r\), it acts coefficientwise on
\(\mathbb Q[\delta][[u]]\).
Applying \(\G_r\) term by term to
\[
 \Phi_N(f)
 =
 \delta^{N+1}\sum_{k\geq0}f(1^{k+1}) u^{k+1},
\]
gives
\[
 \G_r\Phi_N(f)
 =
 \delta^{N+r+1}
 \sum_{k\geq0}
 \binom{k+r}{r}f(1^{k+1}) u^{k+1}.
\]
This is exactly the expression obtained above for
\(\Phi_{N+r}(h_rf)\).  Hence
$\Phi_{N+r}(h_rf)=\G_r\Phi_N(f)$,
as required. This completes the proof.
\end{proof}

The proof also explains why a rising factorial occurs in
Theorem~\ref{thm:factorial}.  Under finite principal specialization,
multiplication by \(h_r\) multiplies the \(k\)-th coefficient by
\[
 h_r(1^{k+1})=\binom{k+r}{r}.
\]
On the Eulerian side, the normalized operator
\[
 \delta^{-r}\G_r
 =
 \frac1{r!}E(E+1)\cdots(E+r-1)
\]
has the same eigenvalue on \(u^{k+1}\):
\[
 \delta^{-r}\G_r(u^{k+1})
 =
 \binom{k+r}{r}u^{k+1}.
\]
Thus the rising factorial in \(E\) is precisely the operator form of the
finite principal specialization
\[
 h_r(1^{k+1})=\binom{k+r}{r}.
\]

\begin{corollary}
\label{cor:symmetric-multiset}
Let \(\boldsymbol m=(m_1,\ldots,m_n)\) have total size \(N\), and set
$h_{\boldsymbol m}=h_{m_1}\cdots h_{m_n}$.
Then
\begin{equation}\label{eq:fundamental-multiset}
 h_{\boldsymbol m}
 =\sum_{\pi\in\mathfrak S_{\boldsymbol m}}
   F_{N,\Des(\pi)},
 \qquad
 \Phi_N(h_{\boldsymbol m})
 =\Hom_{\boldsymbol m}(x,y).
\end{equation}
Moreover, for every permutation \(\sigma\) of \([n]\),
$\Hom_{\boldsymbol m}(x,y)
 =\G_{m_{\sigma(n)}}\cdots
  \G_{m_{\sigma(1)}}(x)$.
\end{corollary}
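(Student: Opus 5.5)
The corollary has three parts, and I would prove them in this order: the fundamental expansion of $h_{\boldsymbol m}$, then the value of $\Phi_N(h_{\boldsymbol m})$, then the order-independence.

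\emph{The fundamental expansion.} I would establish the first identity in \eqref{eq:fundamental-multiset} by a standard standardization (``$P$-partition'') argument. Expand the product
$$h_{m_1}\cdots h_{m_n}$$
as a sum over tuples of weakly increasing index sequences, one sequence of length $m_c$ for each letter $c\in[n]$. Each monomial $z_{i_1}\cdots z_{i_N}$ in this expansion corresponds to a filling in which the letter $c$ carries $m_c$ weakly increasing indices. Sort all $N$ indices into weakly increasing order. Break ties between equal indices by listing the larger letter first, and among copies of the same letter keep their own order. Reading off the letters in this sorted order gives a word $\pi\in\mathfrak S_{\boldsymbol m}$.

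The key point is that the tie-breaking rule is exactly what forces $i_j<i_{j+1}$ whenever $j\in\Des(\pi)$. Conversely, a pair $(\pi,\ i_1\le\cdots\le i_N)$ with strict increases on $\Des(\pi)$ reconstructs a unique filling. At an ascent or plateau of $\pi$, equal indices are allowed because the smaller or equal letter comes first, and this is consistent with the rule. The main obstacle is checking that this map is a bijection. In particular I must verify that equal letters with equal indices can never create a descent, and that ascents impose no condition.

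\emph{The specialization.} Given the expansion, the definition of $\Phi_N$ sends each $F_{N,\Des(\pi)}$ to $x^{\des(\pi)+1}y^{N-\des(\pi)}$. Summing over $\mathfrak S_{\boldsymbol m}$ gives $\Hom_{\boldsymbol m}(x,y)$ by \eqref{eq:homogeneous-enumerator}. The empty case is consistent because $\Phi_0(1)=x$.

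As a check, and as an alternative route that avoids the bijection, the equality $\Phi_N(h_{\boldsymbol m})=\Hom_{\boldsymbol m}$ also follows from the earlier results. Apply Theorem~\ref{thm:pieri-specialization} repeatedly starting from $\Phi_0(1)=x$; this gives
$$\Phi_N(h_{m_n}\cdots h_{m_1})=\G_{m_n}\cdots\G_{m_1}(x).$$
By the corollary following Proposition~\ref{prop:insertion}, the right-hand side equals $\Hom_{\boldsymbol m}$.

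\emph{Order-independence.} Fix a permutation $\sigma$ of $[n]$. Because symmetric functions commute, $h_{\boldsymbol m}=h_{m_{\sigma(n)}}\cdots h_{m_{\sigma(1)}}$. Peeling off one factor at a time with Theorem~\ref{thm:pieri-specialization}, with intermediate degrees $m_{\sigma(1)}+\cdots+m_{\sigma(i)}$, gives
$$\Phi_N(h_{\boldsymbol m})=\G_{m_{\sigma(n)}}\cdots\G_{m_{\sigma(1)}}(x).$$
Alternatively, this follows directly from the pairwise commutativity of the $\G_r$ in Theorem~\ref{thm:factorial}.
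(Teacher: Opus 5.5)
Your plan is the paper's own: encode each monomial of $h_{m_1}\cdots h_{m_n}$ by a labelled, sorted index sequence, apply $\Phi_N$ termwise, and get order-independence by peeling off factors with Theorem~\ref{thm:pieri-specialization} and commuting the $h_r$. However, the tie-breaking rule you state is backwards, and with it the key step fails.

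If equal indices are sorted with the larger letter first, each block of equal indices carries a weakly decreasing run of letters. A descent can then sit between two equal indices. For example, take $\boldsymbol m=(1,1)$ and give both letters index $1$. Your rule produces $\pi=21$ with $i_1=i_2=1$, yet $z_1^2$ is not a monomial of $F_{2,\{1\}}=\sum_{i_1<i_2}z_{i_1}z_{i_2}$. What your rule actually forces is $i_j<i_{j+1}$ at every strict ascent $\pi_j<\pi_{j+1}$. So the map you describe proves $h_{\boldsymbol m}=\sum_{\pi\in\mathfrak S_{\boldsymbol m}}F_{N,\{j:\pi_j<\pi_{j+1}\}}$. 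The stated expansion would then need an extra complementation $c\mapsto n+1-c$, which sends $\mathfrak S_{\boldsymbol m}$ to $\mathfrak S_{(m_n,\ldots,m_1)}$ and ascents to descents, together with $h_{m_n}\cdots h_{m_1}=h_{\boldsymbol m}$.

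The direct fix is to break ties by increasing letter. This is what the paper does, and your own next sentence (``the smaller or equal letter comes first'') already presupposes it. Then every block of equal indices carries a weakly increasing run of letters, so $j\in\Des(\pi)$ forces $i_j<i_{j+1}$. Conversely, suppose $i_1\le\cdots\le i_N$ is strict on $\Des(\pi)$. Within each block of equal indices the letters of $\pi$ weakly increase, so the pairs $(i_j,\pi_j)$ are already in sorted order, and grouping them by letter inverts the map. The substantive check is that \emph{distinct} letters with equal indices cannot form a descent; equal letters never do.

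With this correction the rest is sound. Your alternative derivation of $\Phi_N(h_{\boldsymbol m})=\Hom_{\boldsymbol m}(x,y)$ from Theorem~\ref{thm:pieri-specialization} and the corollary following Proposition~\ref{prop:insertion} is valid and non-circular. It cannot replace the bijection for the fundamental expansion itself, since $\Phi_N$ forgets descent positions. Your order-independence argument coincides with the paper's.
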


\begin{proof}
Recall that
\[
 h_{m_a}
 =
 \sum_{1\leq i_{a,1}\leq\cdots\leq i_{a,m_a}}
 z_{i_{a,1}}\cdots z_{i_{a,m_a}}.
\]
Thus a term in the expansion of \(h_{\boldsymbol m}\) is specified by
one weakly increasing index sequence for each letter \(a\).  Attach the
label \(a\) to every entry of the sequence for that letter, and sort all
\(N\) labeled entries first by their indices and, when the indices are
equal, by their labels.  Write
\[
 i_1\leq\cdots\leq i_N
\]
for the resulting indices and
\(\pi=\pi_1\cdots\pi_N\) for the resulting labels.  Then
\(\pi\in\mathfrak S_{\boldsymbol m}\).  Moreover, if
\(\pi_j>\pi_{j+1}\), then \(i_j=i_{j+1}\) is impossible, since equal
indices were ordered by increasing label.  Hence
\[
 j\in\Des(\pi)\quad\Longrightarrow\quad i_j<i_{j+1}.
\]
The resulting monomial therefore occurs in \(F_{N,\Des(\pi)}\).
Conversely, let \(\pi\in\mathfrak S_{\boldsymbol m}\) and let
\(i_1\leq\cdots\leq i_N\) be strict at every descent of \(\pi\).
On each block of equal indices, the corresponding letters are weakly
increasing.  Thus the pairs \((i_j,\pi_j)\) are already in
lexicographic order.  Grouping them according to their second
coordinates recovers one weakly increasing sequence for each letter.
The two constructions are inverse and preserve the monomial weight,
which proves the first identity in
\eqref{eq:fundamental-multiset}.
Applying \(\Phi_N\) gives
\[
 \Phi_N(h_{\boldsymbol m})
 =\sum_{\pi\in\mathfrak S_{\boldsymbol m}}
   x^{|\Des(\pi)|+1}y^{N-|\Des(\pi)|}
 =\Hom_{\boldsymbol m}(x,y).
\]

For the empty word, set $\Des(\varnothing)=\varnothing$.
Recall that \(\Phi_0(1)=x\).
Taking \(N=0\), \(f=1\), and \(r=m_{\sigma(1)}\) in
Theorem~\ref{thm:pieri-specialization} gives
\[
 \G_{m_{\sigma(1)}}(x)
 =
 \Phi_{m_{\sigma(1)}}(h_{m_{\sigma(1)}}).
\]
Iterating the same identity yields
\[
 \G_{m_{\sigma(n)}}\cdots\G_{m_{\sigma(1)}}(x)
 =
 \Phi_N\!\left(
 h_{m_{\sigma(n)}}\cdots h_{m_{\sigma(1)}}
 \right).
\]
Since multiplication of symmetric functions is commutative,
\[
 h_{m_{\sigma(n)}}\cdots h_{m_{\sigma(1)}}
 =
 h_{m_1}\cdots h_{m_n}
 =
 h_{\boldsymbol m}.
\]
Therefore, it follows from
\(\Phi_N(h_{\boldsymbol m})=\Hom_{\boldsymbol m}(x,y)\) that $\Hom_{\boldsymbol m}(x,y)
 =\G_{m_{\sigma(n)}}\cdots
  \G_{m_{\sigma(1)}}(x)$.
\end{proof}

Once Theorem~\ref{thm:pieri-specialization} is established, the
commutativity of the functions \(h_{m_i}\) explains the independence of
the order of insertion for the Eulerian enumerators.  The stronger
commutativity of the operators on all of \(\mathbb Q[x,y]\) was proved
in Theorem~\ref{thm:factorial}.
It is therefore natural to ask whether the
correspondence $h_r\longmapsto \G_r$
extends from the complete homogeneous functions to the whole ring of
symmetric functions. The next subsection shows that it does.

\subsection{The symmetric-function homomorphism}
\hspace*{\parindent}

Let \(\Lambda\) denote the ring of symmetric functions over
\(\mathbb Q\).  The complete homogeneous functions freely generate $\Lambda=\mathbb Q[h_1,h_2,\ldots]$.
Since the operators \(\G_r\) commute, the same polynomial can be
evaluated at \(\G_1,\G_2,\ldots\).  This defines a unique unital algebra
homomorphism
\begin{equation}\label{rho}
 \rho:\Lambda\longrightarrow
 \operatorname{End}_{\mathbb Q}\bigl(\mathbb Q[x,y]\bigr),
 \qquad
 \rho(h_r)=\G_r,
\end{equation}
where multiplication in the endomorphism algebra is composition.
Concretely, $\rho(h_{r_1}\cdots h_{r_\ell})
 =\G_{r_1}\cdots\G_{r_\ell}$.
The order on the right is immaterial because the insertion operators
commute.

A partition is a weakly decreasing sequence
\(\lambda=(\lambda_1,\ldots,\lambda_\ell)\) of positive integers. Its size
is \(|\lambda|=\lambda_1+\cdots+\lambda_\ell\), and its Young diagram has
\(\lambda_i\) cells in row \(i\). Let \(s_\lambda\) be the Schur function of shape \(\lambda\).
We now apply \(\rho\) to Schur functions.  For the partition
\(\lambda=(\lambda_1,\ldots,\lambda_\ell)\), the Jacobi--Trudi identity
gives
\[
 s_\lambda
 =\det\bigl(h_{\lambda_i-i+j}\bigr)_{1\leq i,j\leq\ell},
\]
where \(h_0=1\) and \(h_m=0\) for \(m<0\), see~\cite[Chapter~7]{StanleyEC2}.  We therefore define
\[
 \G_\lambda=\rho(s_\lambda)
 =\det\bigl(\G_{\lambda_i-i+j}\bigr)_{1\leq i,j\leq\ell},
\]
with \(\G_0=\id\) and \(\G_m=0\) for \(m<0\).  The determinant is unambiguous because all its entries commute.
In particular,
$\G_{(r)}=\G_r$,
since \(s_{(r)}=h_r\). 
For example, since \(h_0=1\), the Jacobi--Trudi identity gives
\[
 s_{(2,1)}
 =
 \det
 \begin{pmatrix}
  h_2 & h_3\\
  h_0 & h_1
 \end{pmatrix}
 =
 h_2h_1-h_3.
\]
Applying \(\rho\) therefore gives
$\G_{(2,1)}=\G_2\G_1-\G_3$.

For partitions \(\lambda\) and \(\mu\), let
\(c_{\lambda\mu}^{\nu}\) be the Littlewood--Richardson coefficients,
defined by
\[
 s_\lambda s_\mu
 =\sum_{\nu}c_{\lambda\mu}^{\nu}s_\nu.
\]
Let \(\Lambda_m\) denote the homogeneous component of degree \(m\).
The next theorem transfers the classical multiplication
identities for Schur functions to the operators \(\G_\lambda\).
\begin{theorem}[The symmetric-function action]
\label{thm:schur-multiplication}
Let \(g\in\Lambda_m\) and \(f\in\mathrm{QSym}_N\).  Since every symmetric function is quasisymmetric, we have
$\Lambda_m\subseteq\mathrm{QSym}_m$.
Thus \(gf\in\mathrm{QSym}_{N+m}\).
Then we have
\[
 \Phi_{N+m}(gf)=\rho(g)\Phi_N(f).
\]
In particular, for every partition \(\lambda\),
$\Phi_{N+|\lambda|}(s_\lambda f)
 =\G_\lambda\Phi_N(f)$.
Moreover, for all partitions \(\lambda\) and \(\mu\), we have
\[
 \G_\lambda\G_\mu
 =\sum_{\nu\vdash|\lambda|+|\mu|}
   c_{\lambda\mu}^{\nu}\G_\nu.
\]
\end{theorem}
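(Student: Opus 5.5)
The identity $\Phi_{N+m}(gf)=\rho(g)\Phi_N(f)$ is linear in $g$, so I will first reduce to a spanning set of $\Lambda_m$. Since $\Lambda=\mathbb Q[h_1,h_2,\ldots]$, the products $h_{r_1}\cdots h_{r_\ell}$ with $r_1+\cdots+r_\ell=m$ span $\Lambda_m$. Here $\rho$ is linear, and $\Phi_{N+m}$ is linear on $\mathrm{QSym}_{N+m}$. Hence it suffices to treat $g=h_{r_1}\cdots h_{r_\ell}$. For such $g$ I will argue by induction on $\ell$, applying Theorem~\ref{thm:pieri-specialization} once per factor. The case $\ell=0$ is $g=1$, and then both sides equal $\Phi_N(f)$. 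For the inductive step, write $g=h_{r_1}g'$ with $g'=h_{r_2}\cdots h_{r_\ell}\in\Lambda_{m-r_1}$. Then $g'f\in\mathrm{QSym}_{N+m-r_1}$, because a product of quasisymmetric functions is quasisymmetric of the added degree. Theorem~\ref{thm:pieri-specialization} applied to $g'f$ gives
\[
\Phi_{N+m}(h_{r_1}g'f)=\G_{r_1}\Phi_{N+m-r_1}(g'f)=\G_{r_1}\rho(g')\Phi_N(f)=\rho(g)\Phi_N(f),
\]
where the middle equality is the induction hypothesis and the last uses that $\rho$ is multiplicative. This proves the first claim. The special case $g=s_\lambda\in\Lambda_{|\lambda|}$ then gives $\Phi_{N+|\lambda|}(s_\lambda f)=\G_\lambda\Phi_N(f)$ by the definition $\G_\lambda=\rho(s_\lambda)$.

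For the Littlewood--Richardson identity no specialization argument is needed, since $\rho$ is an algebra homomorphism. Applying $\rho$ to $s_\lambda s_\mu=\sum_\nu c^\nu_{\lambda\mu}s_\nu$ gives
\[
\G_\lambda\G_\mu=\rho(s_\lambda)\rho(s_\mu)=\rho(s_\lambda s_\mu)=\sum_\nu c^\nu_{\lambda\mu}\G_\nu .
\]
The sum runs over $\nu\vdash|\lambda|+|\mu|$ because the product is homogeneous of that degree.

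The only step needing care is the well-definedness of $\rho$. The value of $\rho$ must be independent of how $g$ is written as a polynomial in the $h_r$. This is guaranteed because the $h_r$ are algebraically independent and the $\G_r$ commute (Theorem~\ref{thm:factorial}), so $\rho$ is a genuine homomorphism from the free commutative algebra. The point to stress is that, without this, expressing $g$ through products of $h$'s in two different ways could a priori give different operators. The induction itself never uses commutativity beyond the ordering chosen.
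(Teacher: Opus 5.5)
Your proposal is correct and follows the paper's proof essentially step for step: you reduce by linearity to products of complete homogeneous functions (the paper uses the basis $h_\alpha$, $\alpha\vdash m$), apply Theorem~\ref{thm:pieri-specialization} once per factor, specialize to $g=s_\lambda$, and get the Littlewood--Richardson identity directly from $\rho$ being an algebra homomorphism. Your induction on the number of factors makes precise what the paper calls ``repeated application,'' and your remark on the well-definedness of $\rho$ repeats what the paper already settles when it defines $\rho$ in \eqref{rho}.
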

\begin{proof}
If \(m=0\), then \(g\) is a scalar and the identity is immediate.
Assume \(m\geq1\).
For a partition
\(\alpha=(\alpha_1,\ldots,\alpha_\ell)\vdash m\), repeated application
of Theorem~\ref{thm:pieri-specialization} gives
\[
\begin{aligned}
 \Phi_{N+m}(h_\alpha f)
 &=
 \Phi_{N+m}
 \bigl(h_{\alpha_1}\cdots h_{\alpha_\ell}f\bigr)=
 \G_{\alpha_1}\cdots\G_{\alpha_\ell}\Phi_N(f)=
 \rho(h_\alpha)\Phi_N(f).
\end{aligned}
\]
The functions \(h_\alpha\), with \(\alpha\vdash m\), form a basis of
\(\Lambda_m\).  Since both sides depend linearly on \(g\), it follows
that
\[
 \Phi_{N+m}(gf)=\rho(g)\Phi_N(f)
\]
for every \(g\in\Lambda_m\).  Taking \(g=s_\lambda\) gives the second
identity.
Finally, since \(\rho\) is an algebra homomorphism,
\[
\begin{aligned}
 \G_\lambda\G_\mu
 &=\rho(s_\lambda)\rho(s_\mu)=\rho(s_\lambda s_\mu)=\rho\left(
   \sum_{\nu}c_{\lambda\mu}^{\nu}s_\nu
   \right)=\sum_{\nu}c_{\lambda\mu}^{\nu}\G_\nu.
\end{aligned}
\]
Only partitions \(\nu\) of size
\(|\lambda|+|\mu|\) can occur, which proves the last identity.
\end{proof}
The Littlewood--Richardson coefficients are classical.  The new point
is that the homomorphism \(\rho\) realizes these multiplication
identities by explicit operators generated by the Eulerian insertion operators.  

\begin{example}
The Pieri identity $h_2h_1=s_{(3)}+s_{(2,1)}$
becomes, under the homomorphism \(\rho\),
\[
 \G_2\G_1=\G_3+\G_{(2,1)}.
\]
We may also read this identity after applying the operators to
\(x=\Phi_0(1)\).  By Corollary~\ref{cor:symmetric-multiset},
\[
 \G_2\G_1(x)
 =\Hom_{(2,1)}(x,y)
 =xy^3+2x^2y^2.
\]
Indeed, the three words of content \((2,1)\) are $112,~121$ and $211$.
The first has no descent, while the other two have one descent.  Since $\G_3(x)=xy^3$,
the operator identity gives $\G_{(2,1)}(x)=2x^2y^2$.
At the level of the Eulerian enumerator, the two Schur terms account
for the term with no descent and the two terms with one descent,
respectively.
\end{example}

Take \(\lambda=(a)\) and \(\mu=(b)\), the one-row Pieri rule gives
\[
 s_{(a)}s_{(b)}
 =
 \sum_{j=0}^{\min(a,b)}
 s_{(a+b-j,j)}.
\]
Applying the algebra homomorphism \(\rho\) gives the following result.
\begin{corollary}[Two one-row operators]
For \(a,b\geq0\),
\[
 \G_a\G_b
 =
 \sum_{j=0}^{\min(a,b)}
 \G_{(a+b-j,j)}.
\]
Zero parts are omitted, and \(\G_\varnothing=\id\).
\end{corollary}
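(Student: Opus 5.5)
The plan is to apply the algebra homomorphism \(\rho\) of \eqref{rho} to the one-row Pieri identity
\[
 h_ah_b=\sum_{j=0}^{\min(a,b)}s_{(a+b-j,j)},
\]
and then use \(\rho(h_r)=\G_r\) and \(\rho(s_\lambda)=\G_\lambda\). First I would dispose of the degenerate cases. If \(a=0\) or \(b=0\), then \(\min(a,b)=0\) and the sum has the single term \(j=0\). After omitting the zero part, this term is \(\G_{(a+b)}=\G_{a+b}\), and it equals \(\G_a\G_b\) because \(\G_0=\id\). If \(a=b=0\), both sides are \(\id=\G_\varnothing\).

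For \(a,b\geq1\), the Pieri identity is the special case \(\lambda=(a)\), \(\mu=(b)\) of the Littlewood--Richardson expansion. One has \(c_{(a)(b)}^{\nu}=1\) exactly when \(\nu=(a+b-j,j)\) with \(0\le j\le\min(a,b)\), and \(c_{(a)(b)}^{\nu}=0\) otherwise. This is the classical Pieri rule: a horizontal strip of size \(b\) is added to a row of length \(a\), so \(\nu\) has at most two rows, its second row has length \(j\le\min(a,b)\), and \(\nu_1=a+b-j\ge a\).

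I would then invoke Theorem~\ref{thm:schur-multiplication}, specifically its last display with \(\lambda=(a)\) and \(\mu=(b)\). Since \(\G_{(a)}=\G_a\) and \(\G_{(b)}=\G_b\), that display becomes exactly the claimed formula. The one point to check is the convention for the term \(j=0\). Here \(\nu=(a+b,0)\) must be read as the one-row partition \((a+b)\), so that \(\G_{(a+b,0)}=\G_{a+b}\). This agrees with the Jacobi--Trudi determinant: the extra row contributes a factor \(\G_0=\id\) on the diagonal and zeros below it.

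I expect no real obstacle, since the statement is a direct specialization of Theorem~\ref{thm:schur-multiplication}. The only care needed is in the bookkeeping of zero parts and the empty partition.
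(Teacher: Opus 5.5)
Your proposal is correct and matches the paper's argument, which applies the homomorphism \(\rho\) to the one-row Pieri identity \(s_{(a)}s_{(b)}=\sum_{j=0}^{\min(a,b)}s_{(a+b-j,j)}\). This is the same as your specialization of Theorem~\ref{thm:schur-multiplication} to \(\lambda=(a)\), \(\mu=(b)\). Your separate treatment of the cases \(a=0\) or \(b=0\), and your Jacobi--Trudi check that \(\G_{(a+b,0)}=\G_{a+b}\), fill in details the paper leaves implicit.
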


For \(m\geq1\), let $p_m=\sum_{i\geq1}z_i^m$
be the \(m\)-th power-sum symmetric function.  
For a partition \(\lambda=(\lambda_1,\ldots,\lambda_\ell)\), write
\(p_\lambda=p_{\lambda_1}\cdots p_{\lambda_\ell}\) and $\ell(\lambda)=\ell$.
The homomorphism \(\rho\) defined by~\eqref{rho} is not injective.  The next result determines
its kernel and the image of every homogeneous component.
\begin{proposition}[Power sums and the kernel of \(\rho\)]
\label{prop:kernel-rho}
We have
$\rho(p_m)=\delta^mE=\delta^{m-1}\G_1$.
Consequently, if \(\lambda\vdash n\), then
$\rho(p_\lambda)=\delta^nE^{\ell(\lambda)}$.
Moreover, $\ker\rho
  =
  \left\langle
    p_i p_{j+1}-p_{i+1}p_j:\ 1\leq i<j
  \right\rangle$.
In particular, for \(n\geq1\),
\[
  \rho(\Lambda_n)
  =
  \delta^n\operatorname{span}_{\mathbb Q}
  \{E,E^2,\ldots,E^n\},~
  \dim\ker(\rho|_{\Lambda_n})=p(n)-n,
\]
where \(p(n)\) is the number of partitions of \(n\).
\end{proposition}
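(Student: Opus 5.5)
The plan is to compute $\rho(p_m)$ from the generating series, then read off everything else from the power-sum basis. Extend $\rho$ coefficientwise to $\Lambda[[z]]$. Since $\rho(h_r)=\G_r$, we get $\rho\bigl(\sum_{r\ge0}h_rz^r\bigr)=\G(z)$. Recall the classical identity
\[
\sum_{r\geq0}h_rz^r=\exp\Bigl(\sum_{m\geq1}\frac{p_m}{m}z^m\Bigr),
\]
and that Theorem~\ref{thm:factorial} gives
\[
\G(z)=\exp\bigl(-\log(1-\delta z)E\bigr)=\exp\Bigl(\sum_{m\geq1}\frac{\delta^mE}{m}z^m\Bigr).
\]
Both exponents lie in commutative algebras and have zero constant term in $z$. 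The map $\rho$, extended coefficientwise, is a continuous algebra homomorphism, so it commutes with $\exp$ and with the formal logarithm of series whose constant term is $1$. Taking logarithms on both sides and comparing coefficients of $z^m$ gives $\rho(p_m)=\delta^mE$. By Lemma~\ref{lem:coordinates} this equals $\delta^{m-1}\G_1$. The main technical point is this justification that $\rho$ commutes with $\log$. I would handle it by noting that $\rho(p_m)$ is a polynomial in $\G_1,\ldots,\G_m$, obtained through Newton's identities from $h$ and $p$. Comparing the same universal identities on the operator side shows that $\rho(p_m)$ equals the $z^m$-coefficient of $m\log\G(z)$. Since $\delta$ commutes with $E$, multiplicativity then gives $\rho(p_\lambda)=\delta^{n}E^{\ell(\lambda)}$ for $\lambda\vdash n$.

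Next I will show that the operators $\delta^nE^k$ with $n\geq k\geq1$ (together with $\id$ for $n=0$) are linearly independent. Suppose $\sum c_{n,k}\delta^nE^k=0$ on $\mathbb Q[x,y]$. Apply it to $x=\Phi_0(1)=\delta\sum_{j\geq1}u^j$, computed in $\mathbb Q[\delta][[u]]$. The result is $\sum_n\delta^{n+1}\sum_{j\geq1}P_n(j)u^j$, where $P_n(t)=\sum_kc_{n,k}t^k$. Comparing powers of $\delta$ and $u$ shows that each $P_n$ vanishes at every positive integer, so $P_n=0$. Because the $p_\lambda$ with $\lambda\vdash n$ form a basis of $\Lambda_n$, this gives
\[
\rho(\Lambda_n)=\delta^n\operatorname{span}\{E,\ldots,E^n\},
\]
of dimension $n$. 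The kernel of $\rho|_{\Lambda_n}$ is spanned by the differences $p_\lambda-p_\mu$ with $|\lambda|=|\mu|=n$ and $\ell(\lambda)=\ell(\mu)$, and its dimension is $p(n)-n$.

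Finally, let $I$ be the ideal generated by the elements $p_ip_{j+1}-p_{i+1}p_j$ with $1\leq i<j$. Each generator maps to $\delta^{i+j+1}E^2-\delta^{i+j+1}E^2=0$, so $I\subseteq\ker\rho$. For the reverse inclusion it suffices to show that $p_\lambda\equiv p_\mu \pmod I$ whenever $\lambda,\mu\vdash n$ have the same length $\ell$. I will show that every such $p_\lambda$ is congruent to the canonical element $p_{n-\ell+1}p_1^{\ell-1}$. The generator yields the move $p_ap_b\equiv p_{a-1}p_{b+1}$ whenever $2\leq a\leq b$: take $i=a-1$ and $j=b$, noting $i<j$. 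Applied inside $p_\lambda$ to the smallest part $a\ge2$ and the largest part $b$, each move strictly decreases the sum of the parts other than the largest one while preserving size and length. The process therefore terminates, and it can only stop at the canonical form. This shows that $\ker\rho$ equals the homogeneous pieces of $I$, and so $\ker\rho=I$.
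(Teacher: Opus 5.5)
Your proposal is correct and follows the paper's route: it uses the same comparison of $\sum h_rz^r=\exp(\sum p_mz^m/m)$ with $\G(z)=\exp(-\log(1-\delta z)E)$, where your Newton-identity remark makes explicit the logarithm step the paper takes for granted, and the same move $p_ap_b\equiv p_{a-1}p_{b+1}$ reducing $p_\lambda$ to $p_1^{\ell-1}p_{n-\ell+1}$ modulo $I$. The one real difference is how you prove independence of the $\delta^nE^k$: you apply them to $x=\delta\sum_{j\geq1}u^j$ and use that a polynomial vanishing at all positive integers is zero, instead of comparing leading differential orders; because your argument covers all degrees $n$ at once, it also gives the homogeneity of $\ker\rho$ that the paper proves by a separate degree count, and you should say this explicitly rather than leave it in the phrase ``homogeneous pieces of $I$''.
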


\begin{proof}
Let \(w\) be a formal variable.  The generating series for $h_r$ is
\[
  \sum_{r\geq0}h_rw^r
  =
  \exp\left(
    \sum_{m\geq1}\frac{p_mw^m}{m}
  \right).
\]
Applying \(\rho\) coefficientwise in \(w\), and using the factorial
formula, gives
\[
\begin{aligned}
  \exp\left(
    \sum_{m\geq1}\frac{\rho(p_m)w^m}{m}
  \right)
  &=
  \sum_{r\geq0}\G_rw^r\\
  &=
  \exp\bigl(-\log(1-\delta w)E\bigr)\\
  &=
  \exp\left(
    \sum_{m\geq1}\frac{\delta^mEw^m}{m}
  \right).
\end{aligned}
\]
Both sides have constant term \(\operatorname{id}\), so their formal
logarithms are defined. Comparing coefficients of \(w^m\) gives
\(\rho(p_m)=\delta^mE\).
Since \(\G_1=\delta E\), we get $\rho(p_m)=\delta^{m-1}\G_1$.
Multiplicativity now gives $\rho(p_\lambda)
  =
  \delta^{|\lambda|}E^{\ell(\lambda)}$.

Let \(I\) be the ideal generated by
$p_i p_{j+1}-p_{i+1}p_j$, where $1\leq i<j$.
The preceding formula shows immediately that \(I\subseteq\ker\rho\).
For \(2\leq a\leq b\), one of the defining relations gives
\[
  p_ap_b
  \equiv
  p_{a-1}p_{b+1}
  \pmod I.
\]
Choose one largest part of a partition \(\lambda\vdash n\).  The preceding
relation transfers one unit from any other part greater than \(1\) to the
chosen largest part.  Repeating this operation gives
\[
  p_\lambda
  \equiv
  p_1^{\ell(\lambda)-1}
  p_{n-\ell(\lambda)+1}
  \pmod I.
\]
Hence the degree-\(n\) component of \(\Lambda/I\) is spanned by
\[
  p_{n},\quad
  p_1p_{n-1},\quad
  p_1^2p_{n-2},\quad\ldots,\quad
  p_1^{n}.
\]
Their images are $\delta^nE,\quad
  \delta^nE^2,\quad\ldots,\quad
  \delta^nE^n$.
These operators are linearly independent.
Indeed,
\[
 \delta^nE^\ell
 =\delta^{n-\ell}\G_1^\ell
 =\delta^{n-\ell}(xy)^\ell D^\ell
   +\text{terms of differential order less than \(\ell\)}.
\]
Thus, in a nonzero linear combination of these operators, the term of
largest differential order cannot be cancelled.
It follows that the map induced by \(\rho\) on the degree-\(n\)
component of \(\Lambda/I\) is injective. Hence
\[
  (\ker\rho)\cap\Lambda_n=I\cap\Lambda_n
\]
for every \(n\geq1\); equality in degree zero is immediate.
If \(g\in\Lambda_n\), then \(\rho(g)\) sends every homogeneous
polynomial of degree \(d\) either to zero or to a homogeneous polynomial
of degree \(d+n\).  Thus contributions from distinct homogeneous
components cannot cancel, and \(\ker\rho\) is homogeneous.  Since \(I\)
is homogeneous and $(\ker\rho)\cap\Lambda_n=I\cap\Lambda_n$
for every \(n\), we conclude that \(\ker\rho=I\).

Finally, the power sums \(p_\lambda\), with \(\lambda\vdash n\), form a
basis of \(\Lambda_n\), and their images depend only on
\(\ell(\lambda)\).  Every value \(1,\ldots,n\) occurs as a partition
length, so
\[
  \rho(\Lambda_n)
  =
  \operatorname{span}_{\mathbb Q}
  \left\{
    \delta^{\,n-\ell}\G_1^\ell:
    1\leq\ell\leq n
  \right\}.
\]
Equivalently, in the coordinates \((\delta,u)\),
$\rho(\Lambda_n)
  =
  \delta^n\operatorname{span}_{\mathbb Q}
  \{E,E^2,\ldots,E^n\}$.
Thus \(\dim\rho(\Lambda_n)=n\), and since
\(\dim\Lambda_n=p(n)\), we obtain $\dim\ker(\rho|_{\Lambda_n})=p(n)-n$.
\end{proof}

Since \(p(1)=1\), \(p(2)=2\), and \(p(3)=3\), there are no nonzero
relations in degrees less than \(4\).  In degree \(4\), the kernel is
one-dimensional.  A direct calculation gives
\[
  p_1p_3-p_2^2
  =
  s_{(3,1)}-3s_{(2,2)}+s_{(2,1,1)}.
\]
Therefore
\[
  \G_{(3,1)}
  -3\G_{(2,2)}
  +\G_{(2,1,1)}
  =0.
\]

\subsection{An Eulerian form of the Pieri rule}
\hspace*{\parindent}

We now transport the classical Pieri rule through the ordinary
specialization.
 Let \(\lambda\vdash N\).  A standard Young tableau of shape
\(\lambda\) is a filling of its diagram with \(1,2,\ldots,N\), strictly increasing
along rows and columns.  For \(T\in\SYT(\lambda)\), set
\[
 \Des(T)
 =
 \{j\in[N-1]:j+1\text{ lies in a lower row than }j\},
\]
and define
\[
 Y_\lambda(x,y)
 =
 \sum_{T\in\SYT(\lambda)}
 x^{|\Des(T)|+1}y^{N-|\Des(T)|}.
\]

The classical fundamental expansion of a Schur function is
\[
 s_\lambda
 =
 \sum_{T\in\SYT(\lambda)}F_{N,\Des(T)};
\]
see \cite{Gessel1984,StanleyEC2}.  
Since \(\Phi_N\) is
linear and $\Phi_N(F_{N,S})=x^{|S|+1}y^{N-|S|}$,
applying \(\Phi_N\) term by term gives
\[
\begin{aligned}
 \Phi_N(s_\lambda)
 &=
 \sum_{T\in\SYT(\lambda)}
 \Phi_N\bigl(F_{N,\Des(T)}\bigr)\\
 &=
 \sum_{T\in\SYT(\lambda)}
 x^{|\Des(T)|+1}y^{N-|\Des(T)|}\\
 &=
 Y_\lambda(x,y).
\end{aligned}
\]
Thus \(Y_\lambda(x,y)\) is the ordinary Eulerian specialization of
\(s_\lambda\). We now apply \(\Phi\) to the classical Pieri rule.
Recall also that a horizontal \(r\)-strip consists of \(r\) cells, no
two in the same column. The empty skew diagram is regarded as a horizontal \(0\)-strip.
The classical Pieri rule (see~\cite{StanleyEC2}) states that
\[
 h_rs_\lambda
 =
 \sum_{\substack{\nu\supseteq\lambda\\
                  \nu/\lambda\text{ is a horizontal }r\text{-strip}}}
 s_\nu.
\]

\begin{theorem}[An Eulerian form of the Pieri rule]
\label{thm:eulerian-pieri}
Let \(\lambda\vdash N\) and \(r\geq0\).  Then
\[
 \G_rY_\lambda(x,y)
 =
 \sum_{\substack{\nu\supseteq\lambda\\
                  \nu/\lambda\text{ is a horizontal }r\text{-strip}}}
 Y_\nu(x,y).
\]
\end{theorem}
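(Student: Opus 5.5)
The plan is to apply Theorem~\ref{thm:pieri-specialization} with \(f=s_\lambda\) and then expand the product \(h_rs_\lambda\) by the classical Pieri rule. Since \(s_\lambda\in\Lambda_N\subseteq\mathrm{QSym}_N\), the theorem gives
\[
 \G_r\Phi_N(s_\lambda)=\Phi_{N+r}(h_rs_\lambda).
\]
The left-hand side equals \(\G_rY_\lambda(x,y)\), by the computation \(\Phi_N(s_\lambda)=Y_\lambda(x,y)\) carried out just before the statement. This computation rests on the fundamental expansion \(s_\lambda=\sum_{T\in\SYT(\lambda)}F_{N,\Des(T)}\).

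For the right-hand side, substitute the Pieri rule,
\[
 h_rs_\lambda=\sum_{\nu/\lambda\text{ horizontal }r\text{-strip}}s_\nu,
\]
in which every \(\nu\) satisfies \(\nu\vdash N+r\). Then use the linearity of \(\Phi_{N+r}\) on \(\mathrm{QSym}_{N+r}\). Applying the same computation in degree \(N+r\) gives \(\Phi_{N+r}(s_\nu)=Y_\nu(x,y)\) for each \(\nu\), and comparing the two sides proves the identity.

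Some degenerate cases need separate checks.
\begin{itemize}
\item When \(r=0\), the only horizontal \(0\)-strip is the empty one, so \(\nu=\lambda\). Both sides are then \(Y_\lambda\), because \(\G_0=\id\).
\item When \(N=0\), so \(\lambda=\varnothing\), we need the convention \(Y_\varnothing=x\). This agrees with \(\Phi_0(1)=x\), since the empty shape has exactly one (empty) tableau, whose descent set is empty. In this case the formula reads \(\G_r(x)=Y_{(r)}=xy^r\), which is consistent with the earlier computation.
\end{itemize}

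There is no real obstacle, because the theorem is a direct transport of the Pieri rule through Theorem~\ref{thm:pieri-specialization}. The only point that needs care is confirming that the definition of \(\Des(T)\) used here matches the convention in the fundamental expansion cited from \cite{Gessel1984,StanleyEC2}. A mismatch would only replace \(F_{N,S}\) by a relabelled set of the same size, which is harmless for \(\Phi_N\) because \(\Phi_N\) depends only on \(|S|\).
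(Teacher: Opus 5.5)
Your proposal is correct and is the paper's own argument: you apply Theorem~\ref{thm:pieri-specialization} with $f=s_\lambda$, expand $h_rs_\lambda$ by the classical Pieri rule, and then use linearity of $\Phi_{N+r}$ together with $\Phi_{N+r}(s_\nu)=Y_\nu$. Your separate checks for $r=0$ and $N=0$ are consistent, though that theorem already covers both cases (using $\Phi_0(1)=x$), and the paper's $\Des(T)$ convention agrees with the one in the cited fundamental expansion, so your closing hedge is not needed.
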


\begin{proof}
Using \(\Phi_N(s_\lambda)=Y_\lambda\), Theorem~\ref{thm:pieri-specialization} and the classical Pieri rule, we obtain
\[
\begin{aligned}
 \G_rY_\lambda
 &=
 \G_r\Phi_N(s_\lambda)\\
 &=
 \Phi_{N+r}(h_rs_\lambda)\\
 &=
 \sum_{\substack{\nu\supseteq\lambda\\
                  \nu/\lambda\text{ is a horizontal }r\text{-strip}}}
 \Phi_{N+r}(s_\nu)\\
 &=
 \sum_{\substack{\nu\supseteq\lambda\\
                  \nu/\lambda\text{ is a horizontal }r\text{-strip}}}
 Y_\nu.
\end{aligned}
\]
\end{proof}

Thus multiplication by \(h_r\), which adds a horizontal strip on the
Schur-function side, becomes the action of the 
operator \(\G_r\) on the Eulerian tableau enumerators.

\begin{example}
Let \(\lambda=(2,1)\) and \(r=2\).  The classical Pieri rule gives
\[
 h_2s_{(2,1)}
 =
 s_{(4,1)}+s_{(3,2)}+s_{(3,1,1)}+s_{(2,2,1)}.
\]
These are exactly the partitions \(\nu\) for which
\(\nu/(2,1)\) is a horizontal \(2\)-strip.  The partition
\((2,1,1,1)\) is excluded because its two new cells lie in the same
column.
Theorem~\ref{thm:eulerian-pieri} therefore gives
\[
 \G_2Y_{(2,1)}
 =
 Y_{(4,1)}+Y_{(3,2)}+Y_{(3,1,1)}+Y_{(2,2,1)}.
\]
Since
\[
 Y_{(2,1)}=2x^2y^2
 \qquad\text{and}\qquad
 \G_2=xy^2D+\frac{x^2y^2}{2}D^2,
\]
direct differentiation gives
$\G_2Y_{(2,1)}
 =
 6x^2y^4+12x^3y^3+2x^4y^2$.
On the other hand, direct enumeration of the standard tableaux gives
\[
\begin{aligned}
 Y_{(4,1)}&=4x^2y^4,~Y_{(3,2)}=2x^2y^4+3x^3y^3,\\
 Y_{(3,1,1)}&=6x^3y^3,~Y_{(2,2,1)}=3x^3y^3+2x^4y^2.
\end{aligned}
\]
Their sum is $6x^2y^4+12x^3y^3+2x^4y^2$,
as predicted by Theorem~\ref{thm:eulerian-pieri}.
\end{example}

\section{The major-index specialization and the $q$-shift}\label{section4}
Throughout this section, \(q\) is an indeterminate, and all identities are understood in \(\mathbb{Q}(q)[[t]]\).
The map \(\Phi_N\) records only the number of descents.  To record the
sum of their positions, we use the reverse finite alphabet
$q^k,q^{k-1},\ldots,q,1$.
For a word \(\pi\), define
\[
 \maj(\pi)=\sum_{j\in\Des(\pi)}j,
 \qquad
 A_{\boldsymbol m}(t,q)
 =\sum_{\pi\in\mathfrak S_{\boldsymbol m}}
   t^{\des(\pi)}q^{\maj(\pi)}.
\]
For the empty multiset, set \(A_\varnothing(t,q)=1\).
We also write
\[
 (t;q)_s=\prod_{j=0}^{s-1}(1-tq^j).
\]
Empty products are understood to be \(1\).
For \(f\in\mathrm{QSym}_N\), set
\[
 \Psi_{N,q}(f;t)
 =
 \sum_{k\geq0}
 f(q^k,q^{k-1},\ldots,q,1,0,0,\ldots)t^k.
\]

\begin{lemma}[Reverse finite principal specialization]
\label{lem:q-principal}
For \(N\geq1\) and \(S\subseteq[N-1]\), we have
\[
 \Psi_{N,q}(F_{N,S};t)
 =
 \frac{t^{|S|}q^{\sum_{j\in S}j}}{(t;q)_{N+1}}.
\]
For \(N=0\), one has
\[
 \Psi_{0,q}(1;t)=\frac1{1-t}.
\]
\end{lemma}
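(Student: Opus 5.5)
The plan is to evaluate $F_{N,S}$ on the reverse alphabet term by term, reduce to a sum over weakly increasing sequences with no restriction, and then use the standard $q$-binomial generating function. Fix $k\geq0$ and substitute $z_i=q^{k+1-i}$ for $1\leq i\leq k+1$ and $z_i=0$ for $i>k+1$. A monomial $z_{i_1}\cdots z_{i_N}$ then survives only when $i_N\leq k+1$, and it contributes $q^{\sum_\ell (k+1-i_\ell)}$. Put $c_\ell=k+1-i_\ell$. The conditions $1\leq i_1\leq\cdots\leq i_N\leq k+1$, with strict inequality at $j\in S$, become
\[
k\geq c_1\geq c_2\geq\cdots\geq c_N\geq 0,\qquad c_j>c_{j+1}\ \text{for } j\in S .
\]
So $F_{N,S}(q^k,\ldots,q,1)=\sum q^{c_1+\cdots+c_N}$, summed over these sequences.

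Next I remove the strictness, mirroring the shift used before Lemma~\ref{lem:Phi-principal} but from the other end. Set
\[
d_\ell=c_\ell-\#\{j\in S:\ j\geq \ell\}.
\]
Then $d_1\geq\cdots\geq d_N\geq0$ with no strict conditions, and $d_1\leq k-|S|$. This correspondence is a bijection. The total changes by
\[
\sum_\ell \#\{j\in S: j\geq \ell\}=\sum_{j\in S} j,
\]
because $j$ is counted once for each $\ell\leq j$. Hence
\[
F_{N,S}(q^k,\ldots,1)=q^{\sum_{j\in S}j}\sum_{k-|S|\geq d_1\geq\cdots\geq d_N\geq0} q^{d_1+\cdots+d_N}=q^{\sum_{j\in S}j}\qbinom{N+k-|S|}{N},
\]
with the convention that this is zero when $k<|S|$. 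The last equality is the classical identity that partitions fitting in an $N\times(k-|S|)$ box have generating function given by that Gaussian binomial.

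Finally I sum over $k$, writing $m=k-|S|$, and use the standard expansion
\[
\sum_{m\geq0}\qbinom{N+m}{N}t^m=\frac{1}{(t;q)_{N+1}}.
\]
This gives $t^{|S|}q^{\sum_{j\in S} j}/(t;q)_{N+1}$. If I want the expansion self-contained, I can prove it quickly: $\sum_m t^m\sum_{m\geq d_1\geq\cdots\geq d_N\geq0}q^{|d|}$ counts sequences with an extra top part $d_0=m$. In terms of the differences $d_0-d_1,\ d_1-d_2,\ldots,\ d_N$, which are free nonnegative integers, the weight factors as $\prod_{i=0}^{N}(1-tq^{i})^{-1}$. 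For $N=0$ every specialization of $1$ equals $1$, so the sum is $\sum_k t^k=1/(1-t)$.

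The only delicate point is the bookkeeping in the shift, namely the direction of the reversal and checking that the $q$-exponent picks up exactly $\mathrm{maj}(S)=\sum_{j\in S}j$ rather than the comaj $\sum_{j\in S}(N-j)$. The reverse alphabet is chosen precisely so that large indices $i$ carry small powers of $q$, which makes strictness at position $j$ cost the $j$ earlier entries.
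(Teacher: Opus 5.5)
Your proof is correct and is essentially the paper's argument: composing your shift $c_\ell\mapsto d_\ell$ with the passage to successive differences (taking $d_0=k-|S|$) gives exactly the paper's free gap variables $a_0,\ldots,a_N$, with $k=|S|+a_0+\cdots+a_N$ and $q$-weight $q^{\sum_{j\in S}j+a_1+2a_2+\cdots+Na_N}$. The only difference is that you stop at fixed $k$ to record $F_{N,S}(q^k,\ldots,q,1)=q^{\sum_{j\in S}j}\qbinom{N+k-|S|}{N}$, the $q$-analogue of the count $F_{N,S}(1^{k+1})=\binom{N+k-|S|}{N}$ used for Lemma~\ref{lem:Phi-principal}, whereas the paper sums over $k$ immediately.
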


\begin{proof}
Fix \(k\geq0\). Index the entries of the reverse alphabet by
\[
  z_{i+1}=q^{k-i},
  \qquad 0\leq i\leq k.
\]
Thus a monomial is indexed by
\[
 0\leq i_1\leq\cdots\leq i_N\leq k,
 \qquad
 j\in S\Longrightarrow i_j<i_{j+1},
\]
and has \(q\)-weight $q^{Nk-(i_1+\cdots+i_N)}$.
Introduce the nonnegative gaps
\[
 a_0=i_1,\qquad
 a_j=i_{j+1}-i_j-\mathbf1_{\{j\in S\}}
 \quad(1\leq j<N),\qquad
 a_N=k-i_N,
\]
where \(\mathbf1_{\{j\in S\}}\) is \(1\) if \(j\in S\) and \(0\)
otherwise.
Conversely, the indices are recovered from
\[
 i_\ell
 =
 a_0+\sum_{j<\ell}
 \bigl(a_j+\mathbf1_{\{j\in S\}}\bigr).
\]
For fixed \(k\), this is a bijection with the nonnegative tuples
satisfying
\[
 a_0+\cdots+a_N=k-|S|.
\]
Equivalently, as \(k\) varies, the variables
\(a_0,\ldots,a_N\) range independently over all nonnegative integers.  The gap equations give
$k=|S|+a_0+\cdots+a_N$
and
\[
 Nk-(i_1+\cdots+i_N)
 =
 \sum_{j\in S}j+a_1+2a_2+\cdots+Na_N.
\]
Therefore
\[
\begin{aligned}
 \Psi_{N,q}(F_{N,S};t)
 &=
 \sum_{a_0,\ldots,a_N\geq0}
 t^{|S|+a_0+\cdots+a_N}
 q^{\sum_{j\in S}j+a_1+2a_2+\cdots+Na_N}\\
 &=
 \frac{t^{|S|}q^{\sum_{j\in S}j}}
 {(1-t)(1-tq)\cdots(1-tq^N)}.
\end{aligned}
\]
The case \(N=0\) is the geometric series.
\end{proof}

Combining Lemma~\ref{lem:q-principal} with
Corollary~\ref{cor:symmetric-multiset} gives
\[
 \Psi_{N,q}(h_{\boldsymbol m};t)
 =
 \frac{A_{\boldsymbol m}(t,q)}{(t;q)_{N+1}}.
\]
Indeed, the term indexed by a word \(\pi\) contributes
\(t^{\des(\pi)}q^{\maj(\pi)}\). 

For \(k,r\geq0\), write
\[
 \qbinom{k+r}{r}
 =
 \prod_{\ell=1}^{r}
 \frac{1-q^{k+\ell}}{1-q^\ell},
\]
where empty products are understood to be \(1\).
Let \(\Theta_q\) denote the \(q\)-shift,
\[
 \Theta_qf(t)=f(qt),
\]
which acts on \(\mathbb Q(q)[[t]]\).
Define
\[
 Q_r=\prod_{i=1}^{r}\frac{1-q^i\Theta_q}{1-q^i},
 \qquad Q_0=\id.
\]

\begin{lemma}[The Gaussian coefficient]\label{lem:q-binomial}
For \(k,r\geq0\),
\[
 h_r(q^k,q^{k-1},\ldots,1)
 =\qbinom{k+r}{r},
 \qquad
 Q_rt^k=\qbinom{k+r}{r}t^k.
\]
\end{lemma}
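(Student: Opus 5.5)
The lemma has two parts, and I will treat them separately. The second part is immediate. Since $\Theta_q t^k=q^kt^k$, every monomial $t^k$ is an eigenvector of each factor $1-q^i\Theta_q$, with eigenvalue $1-q^{i+k}$. Hence
\[
 Q_rt^k=\prod_{i=1}^{r}\frac{1-q^{k+i}}{1-q^i}\,t^k=\qbinom{k+r}{r}t^k,
\]
which is just the definition of the Gaussian coefficient with the index $\ell=i$. The case $r=0$ is trivial, because $Q_0=\id$ and the empty product equals $1$. The factors commute because they are all polynomials in $\Theta_q$, so the order of the product causes no ambiguity.

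For the first part, I will use the fact that $h_r$ is symmetric. Then $h_r(q^k,q^{k-1},\ldots,1)=h_r(1,q,\ldots,q^k)$, and it suffices to show that
\[
 G_r(k):=h_r(1,q,\ldots,q^k)=\qbinom{k+r}{r}.
\]

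The cleanest route is the product form of the generating function,
\[
 \sum_{r\geq0}h_r(1,q,\ldots,q^k)w^r=\prod_{i=0}^{k}\frac1{1-q^iw}.
\]
This identity follows from expanding each geometric series, since a monomial of $h_r$ is a choice of multiplicities summing to $r$. I would then induct on $k$. Call the right-hand product $P_k(w)$; then
\[
 P_k(w)(1-w)=P_{k-1}(qw)\quad\text{with }P_{-1}=1.
\]
Comparing coefficients of $w^r$ gives the recurrence
\[
 G_r(k)-G_{r-1}(k)=q^rG_r(k-1).
\]
It then remains to check that $\qbinom{k+r}{r}$ satisfies the same recurrence, the $q$-Pascal rule
\[
 \qbinom{k+r}{r}=\qbinom{k+r-1}{r-1}+q^r\qbinom{k+r-1}{r},
\]
together with the boundary values $G_0(k)=1$ and $G_r(0)=1$.

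An alternative is to reuse the shape of the proof of Lemma~\ref{lem:q-principal}. Since $h_r=F_{r,\varnothing}$, the generating function $\sum_k h_r(q^k,\ldots,1)t^k$ equals $1/(t;q)_{r+1}$ by that lemma. Extracting the coefficient of $t^k$ with the $q$-binomial theorem $1/(t;q)_{r+1}=\sum_k\qbinom{k+r}{r}t^k$ gives the claim. I would present this second route, citing Lemma~\ref{lem:q-principal} with $S=\varnothing$, and supplement it with the short induction on $r$ for the $q$-binomial theorem. That induction uses
\[
 \frac{1}{(t;q)_{r+1}}=\frac{1}{1-tq^r}\cdot\frac1{(t;q)_r}
\]
and the $q$-Pascal rule.

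The only real step is this $q$-binomial expansion, or equivalently the $q$-Pascal verification. Even that is a two-line algebraic check from the product formula for $\qbinom{k+r}{r}$. I expect no genuine obstacle beyond bookkeeping of the boundary cases $k=0$ and $r=0$.
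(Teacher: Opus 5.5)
Your proposal is correct. Your treatment of $Q_rt^k$ is exactly the paper's; the difference lies in the first identity.

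The paper works directly on the finite alphabet. It splits the monomials of $h_r(1,q,\ldots,q^k)$ according to whether the largest variable $q^k$ occurs, obtaining $h_r(1,\ldots,q^k)=h_r(1,\ldots,q^{k-1})+q^kh_{r-1}(1,\ldots,q^k)$. It then matches this with the $q$-Pascal rule $\qbinom{k+r}{r}=\qbinom{k+r-1}{r}+q^k\qbinom{k+r-1}{r-1}$ and inducts on $k+r$.

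Your first route is essentially the mirror image of this. The relation $P_k(w)(1-w)=P_{k-1}(qw)$ peels off the variable $1$ and rescales the remaining variables by $q$, which yields the other $q$-Pascal rule.

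Your preferred second route is genuinely different. You read off $\sum_{k\geq0}h_r(q^k,\ldots,1)t^k=1/(t;q)_{r+1}$ from Lemma~\ref{lem:q-principal} with $N=r$ and $S=\varnothing$. This is legitimate, since that lemma is proved independently by the gap bijection, so there is no circularity. You then reduce the claim to the $q$-binomial series $1/(t;q)_{r+1}=\sum_{k\geq0}\qbinom{k+r}{r}t^k$. Its induction step is $\qbinom{k+r}{r}=\qbinom{k+r-1}{r-1}+q^r\qbinom{k+r-1}{r}$ for $k\geq1$ (the constant terms are both $1$), followed by division by the unit $1-tq^r$ in $\mathbb Q(q)[[t]]$.

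What this route buys is conceptual. It shows that the Gaussian multiplier is exactly the coefficient sequence of $1/(t;q)_{r+1}$, which Lemma~\ref{lem:q-principal} already produces, so the two lemmas become a single computation. It does not save work, though, because a $q$-Pascal induction is still needed. The paper's argument is more self-contained. On the other hand, you check the Gaussian recurrence from the product formula, where the paper only asserts it.
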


\begin{proof}
The assertion is immediate if \(k=0\) or \(r=0\).  Assume from now on
that \(k,r\geq1\).
Since \(h_r\) is symmetric, we may use the alphabet
\(1,q,\ldots,q^k\).  Separate the monomials according to whether the last variable
\(q^k\) is absent or occurs at least once.  Removing one occurrence
of \(q^k\) in the latter case gives
\[
 h_r(1,q,\ldots,q^k)
 =
 h_r(1,q,\ldots,q^{k-1})
 +q^k h_{r-1}(1,q,\ldots,q^k).
\]
The Gaussian coefficients satisfy the same recurrence,
\[
 \qbinom{k+r}{r}
 =
 \qbinom{k+r-1}{r}
 +q^k\qbinom{k+r-1}{r-1}.
\]
The boundary values agree, so induction on
\(k+r\) proves the first identity.
For the second, \(\Theta_qt^k=q^kt^k\), and hence
\[
\begin{aligned}
 Q_rt^k
 &=
 \prod_{i=1}^{r}
 \frac{1-q^{k+i}}{1-q^i}\,t^k=
 \qbinom{k+r}{r}t^k.
\end{aligned}
\]
\end{proof}

The preceding lemma shows that multiplication by \(h_r\) under the
reverse finite specialization and the operator \(Q_r\) have the same
multiplier on the term \(t^k\).

Let
\[
 D_qf(t)=\frac{f(t)-f(qt)}{(1-q)t},
 \qquad
tD_q=\frac{I-\Theta_q}{1-q}.
\]
Thus \(tD_q(t^k)=[k]_qt^k\).
\begin{proposition}[A \(q\)-factorial form of \(Q_r\)]
For every \(r\geq0\),
\begin{equation}\label{eq:q-factorial-Euler}
 Q_r
 =
 \frac{1}{[r]_q!}
 \prod_{\ell=1}^r
 \bigl([\ell]_q+q^\ell tD_q\bigr),
\end{equation}
where $[r]_q!=[1]_q[2]_q\cdots[r]_q$.
\end{proposition}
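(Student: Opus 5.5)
The plan is to prove the identity factor by factor, as an identity between polynomials in the single operator $\Theta_q$. Both sides of \eqref{eq:q-factorial-Euler} are polynomials in $\Theta_q$ with coefficients in $\mathbb Q(q)$, because $tD_q=(I-\Theta_q)/(1-q)$. Polynomials in $\Theta_q$ commute with one another, so the order of the factors on either side does not matter. It therefore suffices to match the $\ell$-th factor of the product with the $\ell$-th factor of $Q_r$.

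The key computation is, for each $\ell\geq1$,
\[
 [\ell]_q+q^\ell tD_q
 =\frac{1-q^\ell}{1-q}+q^\ell\frac{I-\Theta_q}{1-q}
 =\frac{I-q^\ell\Theta_q}{1-q}.
\]
Consequently
\[
 \frac{[\ell]_q+q^\ell tD_q}{[\ell]_q}
 =\frac{I-q^\ell\Theta_q}{1-q^\ell}.
\]
Multiplying these identities over $\ell=1,\ldots,r$ gives
\[
 \frac{1}{[r]_q!}\prod_{\ell=1}^r\bigl([\ell]_q+q^\ell tD_q\bigr)
 =\prod_{\ell=1}^r\frac{1-q^\ell\Theta_q}{1-q^\ell}=Q_r .
\]
For $r=0$ both sides are $\id$, since the empty product is $1$ and $[0]_q!=1$.

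As a cross-check that does not use operator algebra, one can evaluate both sides on each monomial $t^k$. Since $tD_q(t^k)=[k]_qt^k$, the $\ell$-th factor acts on $t^k$ as multiplication by $[\ell]_q+q^\ell[k]_q=[k+\ell]_q$. The product side therefore sends $t^k$ to
\[
 \frac{[k+1]_q\cdots[k+r]_q}{[r]_q!}\,t^k=\qbinom{k+r}{r}t^k .
\]
By Lemma~\ref{lem:q-binomial}, this agrees with $Q_rt^k$. Both operators act coefficientwise on $\mathbb Q(q)[[t]]$, so they coincide.

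There is no real obstacle. The only point needing care is to confirm that $q^\ell tD_q$ means multiplication by the scalar $q^\ell$ and not a $q$-shift. This is the reading that makes the factor collapse to $(I-q^\ell\Theta_q)/(1-q)$, and it is consistent with the eigenvalue identity $[\ell]_q+q^\ell[k]_q=[k+\ell]_q$.
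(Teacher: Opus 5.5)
Your proof is correct and is essentially the paper's argument: rewrite each factor using $tD_q=(I-\Theta_q)/(1-q)$ to get $([\ell]_q+q^\ell tD_q)/[\ell]_q=(1-q^\ell\Theta_q)/(1-q^\ell)$, then multiply over $\ell=1,\ldots,r$. Your eigenvalue cross-check on $t^k$, via $[\ell]_q+q^\ell[k]_q=[k+\ell]_q$, is also valid and is a second proof in its own right, since both operators act coefficientwise on $\mathbb Q(q)[[t]]$.
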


\begin{proof}
Since $\Theta_q=I-(1-q)tD_q$,
we have
\[
\begin{aligned}
 1-q^\ell\Theta_q
 &=
 1-q^\ell+q^\ell(1-q)tD_q=
 (1-q)\bigl([\ell]_q+q^\ell tD_q\bigr).
\end{aligned}
\]
As \(1-q^\ell=(1-q)[\ell]_q\), it follows that
\[
 \frac{1-q^\ell\Theta_q}{1-q^\ell}
 =
 \frac{[\ell]_q+q^\ell tD_q}{[\ell]_q}.
\]
Multiplying over \(\ell=1,\ldots,r\) proves
\eqref{eq:q-factorial-Euler}.
\end{proof}
Thus the ordinary and \(q\)-operators have parallel factorial forms:
\[
 \delta^{-r}\G_r
 =
 \frac{E(E+1)\cdots(E+r-1)}{r!},
 \qquad
 Q_r
 =
 \frac{\prod_{\ell=1}^r([\ell]_q+q^\ell tD_q)}
      {[r]_q!}.
\]

\begin{theorem}[The \(q\)-specialization]\label{thm:q-factorial}
For \(N,r\geq0\) and \(f\in\mathrm{QSym}_N\), we have
\[
 \Psi_{N+r,q}(h_rf;t)
 =
 Q_r\Psi_{N,q}(f;t).
\]
The operators \(Q_r\) commute pairwise. 
\end{theorem}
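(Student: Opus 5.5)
The plan mirrors the proof of Theorem~\ref{thm:pieri-specialization}, with Lemma~\ref{lem:q-binomial} playing the role of the identity $h_r(1^{k+1})=\binom{k+r}{r}$. The case $r=0$ is trivial, since $h_0=1$ and $Q_0=\id$. For $r\geq1$ we expand by definition:
\[
 \Psi_{N+r,q}(h_rf;t)=\sum_{k\geq0}(h_rf)(q^k,\ldots,q,1)\,t^k .
\]
Evaluation at the finite alphabet $q^k,\ldots,1$ is a ring homomorphism, so each coefficient factors as
\[
 (h_rf)(q^k,\ldots,1)=h_r(q^k,\ldots,1)\,f(q^k,\ldots,1)=\qbinom{k+r}{r}f(q^k,\ldots,1),
\]
where the last equality is the first part of Lemma~\ref{lem:q-binomial}.

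On the operator side, $Q_r$ is a product of the factors $(1-q^i\Theta_q)/(1-q^i)$. Each factor is an operator on $\mathbb Q(q)[[t]]$ that acts diagonally on monomials, because $\Theta_q t^k=q^kt^k$. The only point needing care is that $Q_r$ may be applied coefficientwise to the infinite series $\Psi_{N,q}(f;t)$. This holds because $\Theta_q$ is a continuous ring endomorphism of $\mathbb Q(q)[[t]]$ preserving the $t$-adic order, so every polynomial in $\Theta_q$ acts term by term. The second part of Lemma~\ref{lem:q-binomial} then gives
\[
 Q_r\Psi_{N,q}(f;t)=\sum_{k\geq0}\qbinom{k+r}{r}f(q^k,\ldots,1)\,t^k .
\]
This matches the expansion above coefficient by coefficient, which proves the identity.

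For commutativity, every $Q_r$ is a polynomial in the single operator $\Theta_q$ with coefficients in $\mathbb Q(q)$. The denominators $1-q^i$ are nonzero scalars, so they cause no trouble. Polynomials in one operator commute, so $Q_aQ_b=Q_bQ_a$. Alternatively, all $Q_r$ are diagonal in the basis $\{t^k\}$ and continuous, hence commute.

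I expect no serious obstacle. The step worth stating explicitly is the legitimacy of the term-by-term action, which is the $q$-analogue of the remark, made in the proof of Theorem~\ref{thm:pieri-specialization}, that $\G_r$ acts coefficientwise on $\mathbb Q[\delta][[u]]$. One could also note as a check that the result is consistent with Lemma~\ref{lem:q-principal} when $f=1$, although the argument does not need this.
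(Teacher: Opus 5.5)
Your proposal is correct and follows essentially the same route as the paper: compare coefficients of $t^k$, use multiplicativity of evaluation at the finite alphabet $q^k,\ldots,1$, apply both halves of Lemma~\ref{lem:q-binomial}, and get commutativity from each $Q_r$ being a polynomial in $\Theta_q$. Your separate treatment of $r=0$ and your remark on the term-by-term action of $Q_r$ on $\mathbb Q(q)[[t]]$ are harmless additions that the paper leaves implicit.
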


\begin{proof}
We compare the coefficient of \(t^k\).  Since finite evaluation
preserves products, Lemma~\ref{lem:q-binomial} gives
\[
\begin{aligned}
\bigl[t^k\bigr]\Psi_{N+r,q}(h_rf;t)
 &=
 h_r(q^k,q^{k-1},\ldots,1)
 f(q^k,q^{k-1},\ldots,1)\\
 &=
 \qbinom{k+r}{r}
 [t^k]\Psi_{N,q}(f;t)\\
 &=
\bigl[t^k\bigr]Q_r\Psi_{N,q}(f;t).
\end{aligned}
\]
The coefficients agree for every \(k\), proving the first identity.
Each \(Q_r\) is a polynomial in the single operator \(\Theta_q\);
hence the operators \(Q_r\) commute.
\end{proof}

Starting with $\Psi_{0,q}(1;t)=\frac1{1-t}$,
and applying Theorem~\ref{thm:q-factorial} successively, we obtain
\[
 Q_{m_1}\cdots Q_{m_n}\frac1{1-t}= \frac{A_{\boldsymbol m}(t,q)}{(t;q)_{N+1}}.
\]
Since $$\frac1{1-t}=\sum_{k\geq0}t^k,~
 Q_{m_i}t^k
 =
 \qbinom{k+m_i}{m_i}t^k,$$
applying the operators term by term gives the following expression
\begin{equation}\label{eq:q-factorial-series}
 Q_{m_1}\cdots Q_{m_n}\frac1{1-t}
 =\frac{A_{\boldsymbol m}(t,q)}{(t;q)_{N+1}}
 =\sum_{k\geq0}
 \prod_{i=1}^n
 \qbinom{k+m_i}{m_i}t^k.
\end{equation}
The final equality in~\eqref{eq:q-factorial-series} is MacMahon's multiset major-index identity~\cite{MacMahon,Tielker}.

\begin{proposition}
\label{prop:ordinary-limit}
For \(r\geq0\), let $ E_t=t\partial_t$ and
\[
 B_r
 =
 \frac{(E_t+1)(E_t+2)\cdots(E_t+r)}{r!},
\]
where an empty product is \(1\).  Then, for every
\(F(t)\in\mathbb Q[[t]]\), we have $Q_rF(t)\longrightarrow B_rF(t)$
coefficientwise as \(q\to1\).  Moreover, in the coordinates
\((\delta,u)\), for every polynomial \(g(u)\),
$\G_r\bigl(ug(u)\bigr)
 =
 \delta^ru\,B_rg(u)$,
where \(B_r\) in the last identity acts in the variable \(u\).
\end{proposition}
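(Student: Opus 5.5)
Both parts reduce to eigenvalue computations on monomials. By Lemma~\ref{lem:q-binomial}, $Q_r$ acts diagonally on the basis $\{t^k\}$ with eigenvalue $\qbinom{k+r}{r}$. Since $E_t t^k=kt^k$, the operator $B_r$ acts diagonally with eigenvalue
\[
 \frac{(k+1)(k+2)\cdots(k+r)}{r!}=\binom{k+r}{r}.
\]

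For the limit statement, write $F(t)=\sum_{k\geq0}F_kt^k$. Then
\[
 [t^k]Q_rF=\qbinom{k+r}{r}F_k,
\]
and this coefficient depends only on $F_k$. It therefore suffices to check that $\qbinom{k+r}{r}\to\binom{k+r}{r}$ as $q\to1$, for each fixed $k$. This follows factor by factor from the product formula:
\[
 \frac{1-q^{k+\ell}}{1-q^\ell}=\frac{[k+\ell]_q}{[\ell]_q}\longrightarrow\frac{k+\ell}{\ell}.
\]
The product over $\ell=1,\ldots,r$ of these limits is $\binom{k+r}{r}$, which is exactly the eigenvalue of $B_r$ on $t^k$. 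Hence $Q_rF\to B_rF$ coefficientwise. The only point needing care is that ``$q\to1$'' means evaluating a rational function of $q$. Each Gaussian coefficient is a polynomial in $q$, so the limit is simply evaluation at $q=1$.

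For the second identity, both sides are linear in $g$, so I would take $g(u)=u^k$. Theorem~\ref{thm:factorial} gives $\G_r=\frac{\delta^r}{r!}E(E+1)\cdots(E+r-1)$ with $E=u\partial_u$. Applying this to $u^{k+1}$, as in the proof of Theorem~\ref{thm:pieri-specialization}, gives
\[
 \G_r(u^{k+1})=\delta^r\binom{k+r}{r}u^{k+1}.
\]
On the other side, $B_r$ in the variable $u$ gives $B_r u^k=\binom{k+r}{r}u^k$, so $\delta^r u\,B_r(u^k)=\delta^r\binom{k+r}{r}u^{k+1}$, and the two sides agree.

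Equivalently, this is the operator identity $E\circ u=u\circ(E+1)$, that is, conjugating by multiplication by $u$ shifts $E$ to $E+1$. It turns $\frac1{r!}E(E+1)\cdots(E+r-1)$ into $u\frac1{r!}(E+1)\cdots(E+r)$. I would present this conjugation as the conceptual reason, and the monomial check as the proof.

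There is one subtlety about the domain. $\G_r$ was defined on $\mathbb Q[x,y]$, and its action in $(\delta,u)$ coordinates is justified by Theorem~\ref{thm:factorial}. That theorem acts coefficientwise on $\mathbb Q[\delta][[u]]$, so applying it to polynomials in $u$ with $\delta$ treated as a constant is legitimate. There is no real obstacle; the only step needing explicit justification is the coefficientwise meaning of the limit.
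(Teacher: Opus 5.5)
Your proposal is correct and matches the paper's proof. The limit comes from $Q_rt^k=\qbinom{k+r}{r}t^k$ and the factorwise limit $\frac{1-q^{k+i}}{1-q^i}\to\frac{k+i}{i}$, and the second identity follows from the factorial formula for $\G_r$ together with the shift $(E+j)(ug)=u(E+j+1)g$; the only cosmetic difference is that the paper applies this commutation relation directly to $ug$, while you verify on monomials $u^k$ and cite the commutation as the conceptual reason.
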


\begin{proof}
For every \(k\geq0\),
\[
 Q_rt^k
 =
 \qbinom{k+r}{r}t^k
 \longrightarrow
 \binom{k+r}{r}t^k
 =
 B_rt^k.
\]
Indeed,
\[
  \frac{1-q^{k+i}}{1-q^i}
  \longrightarrow
  \frac{k+i}{i},
\]
and hence
\[
  \qbinom{k+r}{r}
  \longrightarrow
  \prod_{i=1}^r\frac{k+i}{i}
  =
  \binom{k+r}{r}.
\]
Thus, if \(F(t)=\sum_{k\geq0}a_kt^k\), the coefficient of \(t^k\)
in \(Q_rF(t)\) converges to the coefficient of \(t^k\) in
\(B_rF(t)\).
The second identity is immediate when \(r=0\).  For \(r\geq1\), write
\(E=u\partial_u\).  Since
$(E+j)(ug)=u(E+j+1)g$,
the factorial formula gives
\[
\begin{aligned}
 \G_r(ug)
 &=
 \frac{\delta^r}{r!}
 E(E+1)\cdots(E+r-1)(ug)\\
 &=
 \delta^ru\,
 \frac{(E+1)(E+2)\cdots(E+r)}{r!}g\\
 &=
 \delta^ru\,B_rg,
\end{aligned}
\]
as desired. This completes the proof.
\end{proof}
Proposition~\ref{prop:ordinary-limit} completes the comparison between the two operator realizations.
Under \(\Phi_N\), multiplication by \(h_r\) becomes the differential
operator \(\G_r\); under \(\Psi_{N,q}\), it becomes the \(q\)-shift
operator \(Q_r\).  As \(q\to1\), the same binomial multiplier appears in both operator
realizations.  The factor \(u\) accounts for the shift from \(t^k\)
to \(u^{k+1}\), while \(\delta^r\) records the increase in homogeneous
degree under ordinary insertion.  In conclusion, the two operator families arise
from one operation: multiplication by \(h_r\).

\end{document}